 \newtheorem{corollary}{\textbf{Corollary}}[section]
\newtheorem{theorem}{\textbf{Theorem}}[section]
\newtheorem{lemma}{\textbf{Lemma}}[section]
\newtheorem{proposition}{\textbf{Proposition}}[section]
 \numberwithin{equation}{section}
\begin{document}

\title{Relative Nash-type and $L^2$-Sobolev inequalities in Dunkl Setting}
%\subtitle{Dunkl  Nash-type  inequalities}

%\titlerunning{Short form of title}        % if too long for running head

\author{Sami Mustapha   and Mohamed Sifi }

%\authorrunning{Short form of author list} % if too long for running head

\address{S. Mustapha
              Sorbonne Universit\'e, Tour 25, 5e \'{e}tage
Boite 247, 4 place Jussieu F-75252 Paris Cedex 05 }
              \email{sam@math.jussieu.fr}           %  \\
%             \emph{Present address:} of F. Author  %  if needed
           \address{ M. Sifi
              Universit\'e de Tunis El Manar,
Facult\'e des Sciences de Tunis, Laboratoire d'Analyse
Math\'ematique et Applications LR11ES11, 2092, Tunis, Tunisia.}
 \email{mohamed.sifi@fst.utm.tn}

%\date{Received: date / Accepted: date}
% The correct dates will be entered by the editor

\renewcommand{\thefootnote}{}
\maketitle

\begin{abstract}
We investigate local variants of Nash  inequalities in the context of Dunkl operators. Pseudo-Poincar\'e inequalities are first established using
pointwise gradient estimates of the Dunkl heat kernel. These inequalities allow  to obtain  relative Nash-type inequalities  which are used to derive mean value inequalities for subsolutions of the heat equation  on orbits of balls not necessarily centered on the origin.

% \PACS{PACS code1 \and PACS code2 \and more}
\end{abstract}
\footnote{MSC  Primary 35A23,  Secondary 26D10 \and 35J05 \and 35K08. }
\footnote{\noindent Key words and phrases: Pseudo-Poincar\'{e} inequality, Nash inequality, Sobolev inequality, Dunkl operators.}
\keywords{}
\section{Introduction and main results}

The Nash inequality \cite{Na} states the existence of  a constant $C>0$,
such that
\begin{equation}\label{1.1}
  \|f\|_{2}^{1+\frac{2}{ N}}\leq C  \|\nabla f\|_{2} \|f\|_{1}^{\frac{2}{N}},
\end{equation}
for  any  $ f\in \mathcal{C}_0^\infty( \mathbb{R}^N )$. This inequality
is  of fundamental importance
because it accounts in a very simple interpolative way
how a control of the $L^2$-norm  of a function, under a  normalization  condition,  results in a lower bound of the  $L^2$-norm of its gradient.
It was introduced by Nash in 1958 to obtain regularity properties of the solutions to parabolic partial differential equations.

Inequality  (\ref{1.1})
generalizes  to the context of Dunkl operators in the following form \cite{V2}
\begin{equation}\label{1.2}
  \|f\|_{2, \kappa }^{1+\frac{2}{ N+2\gamma}}\leq C  \|\nabla_\kappa f\|_{2,\kappa} \|f\|_{1,\kappa}^{\frac{2}{N+2\gamma}} ,\quad f\in \mathcal{C}_0^\infty(\mathbb{R}^N).
\end{equation}
The number $N+2\gamma$ is  the homogeneous dimension
 and $\nabla_\kappa$ is the Dunkl gradient built from the Dunkl
operators.
The  norms $||.||_{k,q}$ are computed with respect to the weighted measure
\begin{equation}\label{1.3}
d\mu_\kappa(x)=\omega_\kappa(x)dx=\prod_{\alpha\in {\mathcal R}_+}|<\alpha,x>|^{2\kappa_\alpha}dx,\end{equation}
where ${\mathcal{R}_+}$ is a fixed positive root system and
$\kappa$ is a nonnegative multiplicity function $\alpha \rightarrow
\kappa_\alpha$ defined on $\mathcal{R}_+$ (see Sect.~\ref{sec:2}). The weight $\omega_\kappa$
is homogeneous of degree $2\gamma$.
For  $\kappa=0$, Dunkl operators reduce to the usual  partial derivatives and $d\mu_0(x)$ is the  Lebesgue measure.
\\
In a recent paper \cite{V2}  Velicu established (\ref{1.2})  and used  it to obtain an elementary proof of
the Sobolev inequality in the cas $p=2$.
 Nash's inequality can be seen as a weaker form of the Sobolev inequality, since  it can be deduced from it using H\"older's inequality, but in fact these two inequalities  are equivalent to each other and equivalent to the ultracontractive bound on the Markov semigroup associated to the Dunkl Laplacian $\Delta_\kappa$ (see \cite{V2}).

The aim of this article is to  investigate  scale-invariant local
variants of Nash-Dunkl inequality
(\ref{1.2}).
Our main result is the following family of Nash-Dunkl inequalities
on balls.

%Recall that the Dunkl Sobolev space
 %$W_\kappa^{1,p}(\mathbb{R}^N)$ is defined  for all $p\geq 1$  as the  space
% of  all functions $f \in L^p(\mathbb{R}^N, d\mu_\kappa)$ for which
% $\nabla_\kappa f $  exists in a weak sens and
% $\nabla_\kappa f \in L^p(\mathbb{R}^N, d\mu_\kappa)$.

\begin{theorem}\label{thm1} \textit{(Relative Nash-Dunkl inequality).}
Let $B\subset \mathbb{R}^N$ be an Euclidean ball of radius $r(B)>0$. Then for any
 $p>1$  there exists a constant $C>0$ independent of $B$
 such that
for  any function $ f\in  \mathcal{C}_0^\infty(B)$,
\begin{equation}\label{1.4}
  \left(\int_B|f|^p d \mu_\kappa \right)^{1+\frac{p'}{N+2\gamma}}\leq C\frac{r(B)^p}{v_\kappa(B)^{\frac{p}{N+2\gamma}}}\left[\int \left(|\nabla_\kappa f|^p+\frac{|f|^p}{r(B)^p}\right)d\mu_\kappa\right]\|f\|_{1, \kappa}^{\frac{p p'}{N+2\gamma}},
\end{equation}
where $p'$ denotes the H\"older conjugate exponent of $p$ and where the volume $v_\kappa(B)$ is computed with respect to the Dunkl measure (\ref{1.3}).
\end{theorem}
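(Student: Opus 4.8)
\emph{Overview and Step 1 (pseudo-Poincar\'e inequality).} The plan is to interpolate, by optimising a time parameter $t$, between a pseudo-Poincar\'e inequality for the Dunkl heat semigroup $H_t=e^{t\Delta_\kappa}$ and a localised ultracontractivity bound carrying the volume $v_\kappa(B)$; throughout put $D:=N+2\gamma$. The first ingredient, obtained from the pointwise gradient estimates of the Dunkl heat kernel, is
\[
\|f-H_tf\|_{p,\kappa}\le C\sqrt t\,\|\nabla_\kappa f\|_{p,\kappa},\qquad t>0,\ f\in\mathcal{C}_0^\infty(\mathbb{R}^N).
\]
Indeed $f-H_tf=-\int_0^t\frac{d}{ds}(H_sf)\,ds$ and $\frac{d}{ds}(H_sf)=\Delta_\kappa H_sf$, so $\|f-H_tf\|_{p,\kappa}\le\int_0^t\|\Delta_\kappa H_sf\|_{p,\kappa}\,ds$. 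Writing $\Delta_\kappa=\sum_iT_i^2$ with $T_i$ the Dunkl operators and $\Delta_\kappa H_s=\sum_i(T_iH_{s/2})(T_iH_{s/2})$, the kernel bound $\|T_iH_{s/2}\|_{p\to p}\le Cs^{-1/2}$ together with the commutation of the $T_i$ with one another (hence with $H_{s/2}$) and the $L^p$-contractivity of $H_{s/2}$ give $\|\Delta_\kappa H_sf\|_{p,\kappa}\le Cs^{-1/2}\sum_i\|T_if\|_{p,\kappa}\le Cs^{-1/2}\|\nabla_\kappa f\|_{p,\kappa}$; integrating over $(0,t)$ yields the claim.

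\emph{Step 2 (localised ultracontractivity).} For $f\in\mathcal{C}_0^\infty(B)$, $B=B(x_0,r)$, and $0<t\le r^2$ I would establish
\[
\|H_tf\|_{\infty}\le\frac{C\,r^{D}}{t^{D/2}\,v_\kappa(B)}\,\|f\|_{1,\kappa}.
\]
This combines the (base-point uniform) Gaussian upper bound for the Dunkl heat kernel in the form $h_t(y,z)\le C\,v_\kappa(B(z,\sqrt t))^{-1}$ with the doubling property of $\mu_\kappa$: for $z\in B$ one has $B\subset B(z,2r)$, and since $\sqrt t\le r$ the polynomial volume growth $v_\kappa(B(z,\lambda\rho))\le C\lambda^{D}v_\kappa(B(z,\rho))$ ($\lambda\ge1$) gives $v_\kappa(B)\le v_\kappa(B(z,2r))\le C(r/\sqrt t)^{D}v_\kappa(B(z,\sqrt t))$, i.e.\ $v_\kappa(B(z,\sqrt t))\ge c(t^{1/2}/r)^{D}v_\kappa(B)$; inserting this into $|H_tf(y)|\le\big(\sup_{z\in B}h_t(y,z)\big)\|f\|_{1,\kappa}$ gives the bound. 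Interpolating with the $L^1(\mu_\kappa)$-contractivity of $H_t$ via $\|g\|_{p,\kappa}\le\|g\|_\infty^{1/p'}\|g\|_{1,\kappa}^{1/p}$, one gets $\|H_tf\|_{p,\kappa}\le Cr^{D/p'}t^{-D/(2p')}v_\kappa(B)^{-1/p'}\|f\|_{1,\kappa}$.

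\emph{Step 3 (optimisation and conclusion).} Adding the estimates of Steps 1 and 2, for every $0<t\le r^2$,
\[
\|f\|_{p,\kappa}\le C\sqrt t\,\|\nabla_\kappa f\|_{p,\kappa}+\frac{C\,r^{D/p'}}{t^{D/(2p')}\,v_\kappa(B)^{1/p'}}\,\|f\|_{1,\kappa}.
\]
The right-hand side is minimised over $t\in(0,\infty)$ at $t_\ast=\big(c\,r^{D/p'}\|f\|_{1,\kappa}\,v_\kappa(B)^{-1/p'}\,\|\nabla_\kappa f\|_{p,\kappa}^{-1}\big)^{2p'/(p'+D)}$. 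If $t_\ast\le r^2$, the choice $t=t_\ast$ gives $\|f\|_{p,\kappa}^{1+p'/D}\le C\,r\,v_\kappa(B)^{-1/D}\,\|\nabla_\kappa f\|_{p,\kappa}\,\|f\|_{1,\kappa}^{p'/D}$; if $t_\ast>r^2$, which forces $r\,\|\nabla_\kappa f\|_{p,\kappa}\le C\,v_\kappa(B)^{-1/p'}\|f\|_{1,\kappa}$, the choice $t=r^2$ gives $\|f\|_{p,\kappa}\le C\,v_\kappa(B)^{-1/p'}\|f\|_{1,\kappa}$, hence $\|f\|_{p,\kappa}^{1+p'/D}\le C\,r\,v_\kappa(B)^{-1/D}\,(r^{-1}\|f\|_{p,\kappa})\,\|f\|_{1,\kappa}^{p'/D}$. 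In either case
\[
\|f\|_{p,\kappa}^{1+p'/D}\le C\,\frac{r}{v_\kappa(B)^{1/D}}\,\Big(\|\nabla_\kappa f\|_{p,\kappa}+r^{-1}\|f\|_{p,\kappa}\Big)\,\|f\|_{1,\kappa}^{p'/D},
\]
and raising to the power $p$, then using $(a+b)^p\le 2^{p-1}(a^p+b^p)$ to replace the bracket by $\int(|\nabla_\kappa f|^p+r^{-p}|f|^p)\,d\mu_\kappa$, gives exactly \eqref{1.4}.

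\emph{Main obstacle.} The essential work is the heat-kernel input rather than the interpolation: one needs the pointwise (gradient) estimates for the Dunkl heat kernel uniformly in the base point --- in particular $\|\nabla_\kappa H_t\|_{p\to p}\le Ct^{-1/2}$, which must handle the reflection part of the Dunkl operators --- and the sharp on-diagonal bound $h_t(y,z)\le C\,v_\kappa(B(z,\sqrt t))^{-1}$ together with the doubling of $\mu_\kappa$ for Euclidean balls. These are precisely the ingredients prepared in the earlier sections; granted them, Step 3 is elementary.
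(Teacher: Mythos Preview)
Your proposal is correct and follows essentially the same route as the paper: a pseudo-Poincar\'e inequality for $H_t$ derived from the gradient heat-kernel bound, combined with a localised $L^1\!\to\! L^p$ ultracontractivity estimate obtained from the on-diagonal heat-kernel bound and doubling, followed by optimisation in $t$. The only cosmetic differences are that the paper proves the pseudo-Poincar\'e step by duality (pairing with $g$ and applying Schur's test to $\nabla_\kappa T_s$) rather than your direct factorisation $\Delta_\kappa H_s=\sum_i(T_iH_{s/2})(T_iH_{s/2})$, and that instead of your case split $t_\ast\le r^2$ versus $t_\ast>r^2$ it adds the trivial bound $\|f\|_{p,\kappa}\le (t/r)\|f\|_{p,\kappa}$ for $t>r$ and then optimises freely over all $t>0$.
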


For $p=1$, the previous  inequality loses its meaning.  A substitute is given by
the following weak Nash-type inequality.

\begin{theorem}\label{thm2} \textit{(Weak relative Nash-Dunkl inequality).}
Let $B\subset \mathbb{R}^N$ be an Euclidean ball of radius $r(B)>0$. Then, there exists a constant $C>0$ independent of $B$
 such that
for  any function $ f\in  \mathcal{C}_0^\infty(B)$
and $\lambda >0$,

\begin{equation}\label{1.5} \lambda^{1+\frac{1}{N + 2 \gamma}}
  \mu_\kappa \{ |f|\geq \lambda\} \leq
  \frac{Cr(B)}{ v_\kappa (B)^{\frac{1}{N + 2 \gamma}} }
  \left[\int \left(|\nabla_\kappa f|+\frac{|f|}{r(B)}\right)d\mu_\kappa\right]
\|f\|_{1, \kappa}^{\frac{1}{N + 2 \gamma}}.
\end{equation}
\end{theorem}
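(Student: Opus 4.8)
The plan is to deduce the weak inequality \eqref{1.5} from the strong inequality \eqref{1.4} of Theorem \ref{thm1} by a truncation argument, letting the exponent $p$ approach $1$ while controlling the blow-up of the constant. First I would fix a level $\lambda > 0$ and a function $f \in \mathcal{C}_0^\infty(B)$, and apply \eqref{1.4} with a fixed $p \in (1,2]$, say $p = 2$, so that $p' = 2$ as well. Since $|f| \ge \lambda$ on the set $\{|f| \ge \lambda\}$, the left-hand side of \eqref{1.4} is bounded below by $\bigl(\lambda^p \mu_\kappa\{|f| \ge \lambda\}\bigr)^{1+\frac{p'}{N+2\gamma}}$, which already produces the correct power of $\lambda$ on the left of \eqref{1.5} once we untangle the exponents. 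The issue is that the right-hand side of \eqref{1.4} involves $|\nabla_\kappa f|^p$ and $|f|^p$ rather than their first powers, so a direct substitution does not give \eqref{1.5}; this is where truncation enters.

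The key step is therefore to replace $f$ in \eqref{1.4} by a truncation $f_\lambda := \min(|f|, \lambda)\,\mathrm{sgn}(f)$ or, more conveniently, by $g := (|f| - \lambda)_+$, and to observe that such truncations still lie in (the Sobolev closure of) $\mathcal{C}_0^\infty(B)$ and that the Dunkl gradient satisfies the pointwise bound $|\nabla_\kappa g| \le |\nabla_\kappa f|\,\mathds{1}_{\{|f| > \lambda\}}$ away from the reflection hyperplanes, together with an analogous control on the exceptional set coming from the difference quotient part of $\nabla_\kappa$. Applying \eqref{1.4} to $g$ on the level set $\{|f| \ge 2\lambda\} \subset \{g \ge \lambda\}$, one gets on the left $\bigl(\lambda^p \mu_\kappa\{|f| \ge 2\lambda\}\bigr)^{1+p'/(N+2\gamma)}$, and on the right an integral of $|\nabla_\kappa f|^p + |f|^p/r(B)^p$ over $\{|f| > \lambda\}$ times $\|g\|_{1,\kappa}^{pp'/(N+2\gamma)}$. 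A dyadic summation over the levels $\lambda, 2\lambda, 4\lambda, \dots$ (Maz'ya-type layer-cake argument), or alternatively an optimization over the truncation height, then converts the $p$-th powers into first powers: $\sum_j \int_{\{|f| > 2^j\lambda\}} |\nabla_\kappa f|^p \lesssim \bigl(\int |\nabla_\kappa f|\bigr)^p$ by the structure of the level sets, and similarly for the zeroth-order term. Tracking the volume factor $v_\kappa(B)$ and the radius factor $r(B)$ through these manipulations — using that the constant $C$ in \eqref{1.4} is uniform in $B$ — yields \eqref{1.5} with a constant depending only on $N+2\gamma$ and the fixed choice of $p$.

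The main obstacle I anticipate is the handling of the Dunkl gradient under truncation: unlike the classical gradient, $\nabla_\kappa$ contains the nonlocal difference operators $\frac{f(x) - f(\sigma_\alpha x)}{\langle \alpha, x\rangle}$, so the chain-rule-type estimate $|\nabla_\kappa(\phi \circ f)| \le (\sup|\phi'|)\,|\nabla_\kappa f|$ is not immediate and must be justified — one checks it separately for the differential part and for the reflection part, using that $t \mapsto (t-\lambda)_+$ is $1$-Lipschitz and monotone, so that $|(f(x)-\lambda)_+ - (f(\sigma_\alpha x)-\lambda)_+| \le |f(x) - f(\sigma_\alpha x)|$ pointwise. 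A secondary technical point is the approximation argument ensuring that the truncated functions, which are merely Lipschitz, can be used in \eqref{1.4}, which is stated for $\mathcal{C}_0^\infty(B)$; this is routine by mollification but should be mentioned. Once these points are dispatched, the dyadic bookkeeping is elementary.
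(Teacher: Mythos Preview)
Your approach has a genuine gap at the step where you claim the dyadic summation ``converts the $p$-th powers into first powers.'' The asserted bound
\[
\sum_j \int_{\{|f| > 2^j\lambda\}} |\nabla_\kappa f|^p \ \lesssim\ \left(\int |\nabla_\kappa f|\,d\mu_\kappa\right)^p
\]
is false in general: take $f$ whose gradient is concentrated on a set of small $\mu_\kappa$-measure, so that $\|\nabla_\kappa f\|_{p,\kappa}$ is large while $\|\nabla_\kappa f\|_{1,\kappa}$ remains bounded. More broadly, no truncation or layer-cake manipulation can replace an $L^p$ gradient norm (with $p>1$) on the right-hand side by an $L^1$ gradient norm, since H\"older's inequality goes the other way. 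Maz'ya truncation transports Sobolev-type inequalities from the $L^1$ endpoint \emph{up} to $L^p$, not down; the $p=1$ case is not a formal consequence of the $p>1$ family. So Theorem~\ref{thm2} cannot be read off from Theorem~\ref{thm1} as you propose, and the obstacle is prior to (and independent of) the truncation issue for the nonlocal part of $\nabla_\kappa$ that you flag.

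The paper avoids this by returning to the two ingredients behind Theorem~\ref{thm1} --- the pseudo-Poincar\'e inequality of Proposition~\ref{prop2} and the heat-kernel bound~(\ref{2.4}),~(\ref{3.5}) --- and using them directly at $p=1$, where Proposition~\ref{prop2} is already stated. One writes, for $0<t\le r$,
\[
\mu_\kappa\{|f|\ge\lambda\}\ \le\ \mu_\kappa\{|f-T_{t^2}f|\ge\lambda/2\}\ +\ \mu_\kappa\{T_{t^2}|f|\ge\lambda/2\}.
\]
For $\lambda$ large (at least a constant times $\|f\|_{1,\kappa}/V_\kappa(a,r)$) one chooses $t\le r$ via~(\ref{3.5}) so that $\|T_{t^2}|f|\|_\infty<\lambda/2$ and the second term vanishes; the first is then controlled by Chebyshev together with the $p=1$ pseudo-Poincar\'e estimate $\|f-T_{t^2}f\|_{1,\kappa}\le Ct\,\|\nabla_\kappa f\|_{1,\kappa}$. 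For $\lambda$ small one just uses Chebyshev. No truncation of $f$ is involved, and the interaction of $\nabla_\kappa$ with cut-offs of $f$ never enters.
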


It is well known (see \cite{CKS}, \cite{Sa0}) that for $p=2$ inequality (\ref{1.4}) is equivalent to the following Sobolev-type inequality.
\begin{theorem}\label{thm3} \textit{(Relative Sobolev-Dunkl inequality).}
Let $B\subset \mathbb{R}^N$ be an Euclidean ball of radius $r(B)>0$. Then
 there exists a constant $C>0$ independent of $B$
 such that
for  any function $ f\in \mathcal{C}_0^\infty(B)$,
\begin{equation}\label{1.6}
  \left(\int_B |f|^{\frac{2(N+ 2\gamma)}{N+2\gamma-2}}d\mu_\kappa\right)^{\frac{N+2\gamma-2}{N+2\gamma}}\leq C\frac{r(B)^2}{v_\kappa(B)^{\frac{2}{N+2\gamma}}}\int\left(|\nabla_\kappa f|^2+\frac{|f|^2}{r(B)^2}\right)d\mu_\kappa.
\end{equation}

\end{theorem}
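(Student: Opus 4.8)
The plan is to read off Theorem~\ref{thm3} from Theorem~\ref{thm1} applied with $p=2$, using the classical equivalence between Nash-type and $L^{2}$-Sobolev-type inequalities for symmetric Dirichlet forms (\cite{CKS}, \cite{Sa0}). Fix the Euclidean ball $B$, write $\nu:=N+2\gamma$ and assume $\nu>2$ (otherwise the exponent $q:=\tfrac{2\nu}{\nu-2}$ appearing in (\ref{1.6}) is not defined), and introduce the quadratic form
\[
\mathcal{E}_B(f):=\int_{\mathbb{R}^N}\Big(|\nabla_\kappa f|^{2}+\frac{|f|^{2}}{r(B)^{2}}\Big)\,d\mu_\kappa ,\qquad f\in\mathcal{C}_0^\infty(B),
\]
together with $A:=C\,r(B)^{2}\,v_\kappa(B)^{-2/\nu}$, where $C$ is the constant furnished by Theorem~\ref{thm1}. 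With this notation Theorem~\ref{thm1} at $p=2$ reads $\|f\|_{2,\kappa}^{2+4/\nu}\le A\,\mathcal{E}_B(f)\,\|f\|_{1,\kappa}^{4/\nu}$, which is precisely a Nash inequality of (homogeneous) dimension $\nu$ for the form $\mathcal{E}_B$; and since the left-hand side of (\ref{1.6}) equals $\|f\|_{q,\kappa}^{2}$, that inequality is precisely the matching Sobolev inequality $\|f\|_{q,\kappa}^{2}\le A'\,\mathcal{E}_B(f)$. The converse implication is immediate from H\"older interpolation $\|f\|_{2,\kappa}\le\|f\|_{q,\kappa}^{\nu/(\nu+2)}\,\|f\|_{1,\kappa}^{2/(\nu+2)}$, which is exactly what makes the two inequalities equivalent; the real content is the passage from Nash to Sobolev.

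To carry out that passage I would invoke \cite{CKS} (see also \cite{Sa0}): for a symmetric Dirichlet form, a Nash inequality of the above shape is equivalent to the corresponding Sobolev inequality, the constants differing only by a factor depending on $\nu$ alone. Applied to $\mathcal{E}_B$ this yields (\ref{1.6}) with $A'=c(\nu)\,A$, a constant that---depending only on $\nu$ and on the constant of Theorem~\ref{thm1}---is independent of $B$, while the scale-invariant factor $r(B)^{2}v_\kappa(B)^{-2/\nu}$ is simply inherited from Theorem~\ref{thm1}. The mechanism behind that equivalence, which I could reproduce if wanted, is Nash's own method: writing $(P_t^{B})_{t>0}$ for the submarkovian semigroup associated with $\mathcal{E}_B$ and $u(t):=\|P_t^{B}f\|_{2,\kappa}^{2}$ with $\|f\|_{1,\kappa}=1$, one has $u'(t)=-2\,\mathcal{E}_B(P_t^{B}f)$, so the Nash inequality together with the $L^{1}$-contractivity of $P_t^{B}$ gives $u'(t)\le-\tfrac{2}{A}\,u(t)^{1+2/\nu}$ and hence $u(t)\le(cA/t)^{\nu/2}$; this $L^{1}\to L^{2}$ bound self-improves, by duality and the semigroup property, to the on-diagonal bound $\|P_t^{B}\|_{1\to\infty}\le c(\nu)\,A^{\nu/2}t^{-\nu/2}$, from which the $L^{2}$-Sobolev inequality (\ref{1.6}) follows by the standard passage from an ultracontractive heat estimate to a Sobolev inequality (\cite{CKS}, \cite{Sa0}). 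A purely functional dyadic-truncation argument, avoiding semigroups, is equally available.

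The one place where genuine care is needed---and the only Dunkl-specific point---is to make sure this machinery legitimately applies to the \emph{nonlocal} operator $\nabla_\kappa$, i.e.\ that $\mathcal{E}_B$ really is a Dirichlet form on $L^{2}(d\mu_\kappa)$. Closedness is clear, since $\mathcal{E}_B$ is equivalent to the form norm; the substantive property is the Markovian one, $\mathcal{E}_B(\phi\circ f)\le\mathcal{E}_B(f)$ for every normal contraction $\phi$. The zero-order term causes no trouble, and for the Dunkl part $f\mapsto\int|\nabla_\kappa f|^{2}d\mu_\kappa=\langle-\Delta_\kappa f,f\rangle_{L^{2}(\mu_\kappa)}$ this reduces to the known fact that the Dunkl heat semigroup is positivity preserving and $L^{\infty}$-contractive---equivalently, in the Beurling--Deny decomposition of this form the jump part is carried by a measure supported on the reflection-orbit pairs $(x,\sigma_\alpha x)$, $\alpha\in\mathcal{R}_+$, and there is no killing part, so that normal contractions do not increase the energy (this is also what makes the dyadic-truncation proof run, through $\sum_{k\in\mathbb{Z}}\mathcal{E}_B\big((|f|-2^{k})_+\wedge 2^{k}\big)\le\mathcal{E}_B(f)$). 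In the classical local case this is the triviality $\nabla(\phi\circ f)=(\phi'\circ f)\,\nabla f$; for $\nabla_\kappa$ the difference terms $\kappa_\alpha\frac{\langle\alpha,e_i\rangle}{\langle\alpha,x\rangle}\big(f(x)-f(\sigma_\alpha x)\big)$ turn the point into a theorem rather than an observation, but a theorem that is on record. Once it is in hand, everything else is bookkeeping with constants depending only on $\nu$.
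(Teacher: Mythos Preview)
Your proposal is correct and follows exactly the route the paper takes: the paper does not give a separate argument for Theorem~\ref{thm3} but simply declares, just before its statement, that ``it is well known (see \cite{CKS}, \cite{Sa0}) that for $p=2$ inequality (\ref{1.4}) is equivalent to the following Sobolev-type inequality,'' and leaves it at that. Your write-up supplies precisely the content behind that citation---the Nash $\Rightarrow$ ultracontractivity $\Rightarrow$ Sobolev mechanism and the verification that the Dunkl energy is Markovian---so it is the same approach, only made explicit.
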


We qualify these inequalities as relative to refer to the ball $B$ where they are considered. The important point is their invariance by scaling and the fact that the constant $C$ is independent of the ball $B$.

Notice that letting $p=2$ and $r(B)\rightarrow\infty$ in (\ref{1.4}) (resp. in (\ref{1.6})) yields (\ref{1.2})
(resp.  the Dunkl-Sobolev inequality). This results follow from the fact that the Dunkl volume $v_\kappa(B)$ satisfies the lower bound (see Sect.~\ref{sec:2}):
$$v_\kappa(B)\geq c r(B)^{N+2\gamma}.$$

Our method therefore offers an alternative approach
to establishing Nash and Sobolev inequalities in the
context of Dunkl operators allowing to generalize some results of
 \cite{V2}.\\

The inequality of Nash (\ref{1.2}) is easily demonstrated by an adaptation of the original approach of \cite{Na} thanks to  the Dunkl transform.
As Nash points out in \cite{Na}, this inequality was in fact demonstrated, at his request, by E. Stein  and the proof is based on the use of
the Fourier transform.  \\

The local variant (\ref{1.4}) is more difficult to establish. One possible approach is to use pseudo-Poincar\'e  inequalities.
Such Inequalities  were established
by  S.  Adhikari, V. P. Anoop and S.  Parui  in \cite{AAP} for  $p=2$
by Velicu in \cite{V2} for $ 1\leq p \leq 2 $.
The proofs developed in \cite{AAP} and \cite{V2} are different in nature.
The $ L^2 $ nature of the inequality established in \cite{AAP} allows the Dunkl transform to be used  and the Velicu result is based on the use of the carr\'e-du-champ operator and semi-group techniques. \\

The main contribution of this paper is to note that the gradient heat kernel  estimates recently established by Anker et
al in \cite{An2}  allow  to remove
the restrictive hypothesis $1\leq p \leq 2$ and  to derive  pseudo-Poincar\'e inequalities
for all $ p \geq 1 $. Once we have these inequalities we  are able to
adapt the general approach
developed in \cite{BCLS}, \cite{Sa0}, \cite{Sa1} and derive the relative Nash and Sobolev inequalities (\ref{1.4}), (\ref{1.5}) and (\ref{1.6}).
Finally, as an application of the $L^2$-Sobolev inequality (\ref{1.6})
we derive mean value inequalities for subsolutions of the heat equation
using Moser's iteration argument.

\section{Background and preliminaries}\label{sec:2}
In this section we recall some important properties of Dunkl operators
and collect some  preliminary assertions which are necessary in the
proof of our main results. For more
details see   \cite{An}, \cite{deJ}, \cite{Du}, \cite{Du2} and \cite{R1} for
an overview of Dunkl theory.

Throughout the remainder of this paper, $C$ will denote a positive constant which can differ  from one occurrence to another, even
in the same formula and we will use $ A \approx  B$  to say that
the ratio $A/B$ is bounded between two positive constants.

We consider $\mathbb{R}^N$ with the Euclidean scalar product $<.,.>$
and its associated norm $|x|=\sqrt{<x,x>}$. For $\alpha \in
\mathbb{R}^N\setminus \{0\}$, the reflection $\sigma_\alpha$ with
respect to the hyperplan $H_\alpha$ orthogonal to $\alpha$ is given
by
$$ \sigma_\alpha(x) = x -2\frac{<x,\alpha>}{|\alpha|^2}\alpha,
\quad x \in \mathbb{R}^N.$$ A finite set ${\mathcal R} \subset
\mathbb{R}^N \setminus \{0\}$ is called a reduced root subsystem if
${\mathcal R} \cap \mathbb{R} \alpha =\{\mp \alpha\}$ and
$\sigma_\alpha {\mathcal R} = {\mathcal R}$ for all $ \alpha \in
{\mathcal R}$.
The finite group $G$ generated by the reflections $\sigma_\alpha$, $
\alpha \in {\mathcal R}$  is called the Coxeter-Weyl group of $
{\mathcal R}$.

Then,  we fix a $G$-invariant function $\kappa :
{\mathcal R} \longrightarrow \mathbb{C}$ called the multiplicity
function of the root system. We assume in this article that $\kappa$
takes its values in $[0,+\infty[$ and that the root system is
reduced and normalized so that $|\alpha|^2=2$, $ \alpha \in
{\mathcal R}$.

The Dunkl operators $T_j$ $(j=1, \ldots, N)$, introduced
in \cite{Du}, are the following $\kappa$-deformations of the usual
directional derivatives $\partial/{\partial x_j}$ by reflections
$$ T_j f(x) =
\frac{ \partial f}{\partial x_j }(x) + \sum_{\alpha \in {\mathcal
R}^+} \kappa(\alpha) \alpha_j\frac{f(x) -f(\sigma_\alpha x)} {
<x,\alpha>},$$ where ${\mathcal R}^+$ is a positive subsystem of
${\mathcal R}$. The definition is of course independent of the
choice of the positive subsystem since $\kappa$ is $G$-invariant.
The Dunkl operators  $T_j$  are skew-symmetric with respect to the $G$-invariant measure
$$d\mu_\kappa(x)=\omega_\kappa(x)dx=\prod_{\alpha\in \mathcal{R}_+}|<\alpha,x>|^{2k_\alpha}dx.$$
 A
 fundamental property of these differential-difference operators is
 their commutativity, that is to say $T_{k}T_{l}=T_{l}T_{k}$. Closely related to them is the so-called intertwining
 operator $V_{\kappa}$  which is the unique linear
 isomorphism of $\bigoplus_{n\geq 0} \mathcal{P}_{n}$ such that $$
 V_{\kappa}( \mathcal{P}_{n})=\mathcal{P}_{n}, \ \ \ \   V_{\kappa}(1)=1, \ \
 \ T_{j} V_{\kappa}=V_{\kappa}\partial_{j}, \ for \ j=1,...,N,$$
 with $\mathcal{P}_{n}$ the subspace of homogeneous polynomials of
 degree $n$ in $N$ variables.
Even if the positivity of the
 intertwining operator has been established  by M.
 R\"{o}sler, an explicit formula of $V_{\kappa}$ is not known in general.
 However,  the operator $V_{\kappa}$ possesses the integral representation
 $$V_{\kappa}f(x)=\int_{\mathbb{R}^{N}}f(y) d\mu_{x}(y) ,$$
 where
 $\mu_{x}$ is a probability measure on $\mathbb{R}^{N}$ with
 support in the Euclidean ball  of center 0 and radius
 $|x|$.  The function
 $E(x,y)=V_{\kappa}^{x}[e^{<x,y>}]$, where the superscript means
 that $V_{\kappa}$ is applied to the $x$ variable, plays an
 important role in the development of the Dunkl transform.
 In particular, the function  $$E(x,iy)=V_{\kappa}^{x}[e^{i<x,y>}], \ \ \ \ \ \ x, \ y\in \mathbb{R}^{N},$$
 plays the role of $e^{i<x,y>}$ in the ordinary Fourier analysis.
 The Dunkl transform is defined in terms of it by
 $$\mathcal{F}(f)(y)=c_{\kappa}\int_{\mathbb{R}^{d}}f(x)E(x,-iy)d\mu_{\kappa}(x)
dx, \ \  \ \ \ \ \ y\in \mathbb{R}^{N}. $$ If $\kappa=0$, then
$V_{\kappa}=id$ and the Dunkl transform coincides with the usual
Fourier transform. As in the classical case, the Dunkl transform defines a topological automorphism of
$\mathcal{S}(\mathbb{R}^{N})$ and
extends to an isometry of $L^{2}(\mathbb{R}^{N},d\mu_{\kappa}).$

Let $\gamma=\displaystyle \sum_{\alpha\in \mathcal{R}_+}\kappa(\alpha)$. The number $N+2\gamma$ is called the homogeneous dimension, because of the obvious scaling property
\begin{equation}\label{2.1}
  V_\kappa(ta,tr)=t^{N+2\gamma}V_\kappa(a,r),\quad t>0,\end{equation}
where $V_\kappa(a,r)=\mu_\kappa(B_r(a))$,
$B_r(a)$ being the Euclidean  ball of radius $r$ and centered at $a\in \mathbb{R}^N$.

We will also need to use
 the distance $d(a,x) = \min_{\sigma \in G}|x -\sigma a |$ (the distance between the
$G$-orbits $\mathcal{O}(a)$ and $\mathcal{O}(x)$). Obviously, the corresponding balls
$$ B_r^G(a) = \{x \in \mathbb{R}^N, \quad d(a,x) <r\} = \mathcal{O}\left(B_r(a)\right),
 \quad a\in \mathbb{R}^N,\quad  r>0,$$
satisfy
\begin{equation}\label{2.2}
V_\kappa(a,r) \leq \mu_\kappa \left(B_r^G(a)\right) \leq |G|V_\kappa(a,r).
\end{equation}
We will denote by $\nabla_\kappa=(T_1,\ldots, T_N)$  the Dunkl gradient and  $ \Delta_\kappa = \displaystyle \sum_{j=1}^N T_j^2$ the Dunkl-Laplacian operator.
The Dunkl-Laplacian acts on ${\mathcal C}^2$-functions as
$$\Delta_\kappa f(x) =  \Delta  f(x) +
2 \sum_{\alpha \in {\mathcal R}^+} \kappa(\alpha) \left( \frac{<
\nabla f(x), \alpha>}{<\alpha, x>} - \frac{f(x) -f(\sigma_\alpha x)}
{ <x,\alpha>^2} \right),$$ where $\Delta$ is the classical Laplacian
operator on $\mathbb{R}^N$ and $\nabla$ the associated gradient.
The operator $\Delta_\kappa$  is essentially self-adjoint on
$L^2(\mathbb{R}^N, d\mu_\kappa) $ and generates the heat semigroup
$T_t=e^{-t \Delta_\kappa}$, $(t>0)$.
 Via the Dunkl transform, the heat semigroup  is given by
$$ T_t f(x)= \mathcal{F}^{-1}\left(e^{-t|\xi|^2} \mathcal{F}(\xi)\right)(x).$$
Alternately  \cite{RV}
\begin{equation}\label{2.3}
 T_tf(x) = f \star h_t(x) = \int_{\mathbb{R}^N}h_t(x,y)f(y)d\mu_\kappa(y),\quad
t>0,\quad x \in \mathbb{R}^N,
\end{equation}
where the heat kernel $h_t(x,y)$ is given by a smooth positive radial convolution kernel.
Notice that  (\ref{2.3})
defines a strongly continuous semigroup of linear contractions on
$L^p(\mathbb{R}^N, d\mu_\kappa) $, for every $1\leq  p < \infty$.

The heat kernel  satisfies the following  Gaussian upper  bounds
for the orbit distance $ d(x,y)$ (see \cite{An2}):
\begin{equation}\label{2.4}
 h_t(x,y) \leq \frac{C}{\max\{V_\kappa(x, \sqrt t), V_\kappa(y, \sqrt t)\}}
 e^{-\frac{d(x,y)^2}{Ct}},\quad
t>0,\quad x, y \in \mathbb{R}^N.\end{equation}
Another estimate which plays a fundamental role in our analysis is the Gaussian upper estimate of the spatial gradient of the heat kernel \cite{An2}:
\begin{equation}\label{2.5}
 |\nabla_{\kappa, x} h_t(x,y)|  \leq \frac{C}{
\sqrt t \max\{V_\kappa(x, \sqrt t), V_\kappa(y, \sqrt t)\}}
 e^{-\frac{d(x,y)^2}{Ct}},\quad
t>0,\quad x, y \in \mathbb{R}^N.\end{equation}

Finally, the following volume estimates will be important in all subsequent proofs.
The first two assertions are well known [4, Sect. 3].
We include detailed proofs for the reader's convenience.

\begin{proposition}\label{prop1}
\begin{itemize}\item [i)] Let $a\in \mathbb{R}^N$ and $r>0$. Then
\begin{equation}\label{2.6}
 V_\kappa(a,r) \approx
     r^N\prod_{\alpha\in \mathcal{R}_+}(|<\alpha,a>|+r)^{2\kappa(\alpha)}.\end{equation}
\item [ii)] There exists a constant $C\geq 1$ such that for every $a\in \mathbb{R}^N$ and for every $r\geq s>0$,
\begin{equation}\label{2.7}
    C^{-1}\left(\frac{r}{s}\right)^N\leq\frac{V_\kappa(a,r)}{V_\kappa(a,s)}\leq C \left(\frac{r}{s}\right)^{N+2\gamma}.
\end{equation}
\item [iii)] There exists a constant $C\geq 1$ such that for every
 $a\in \mathbb{R}^N$,   $0<s \leq r $ and
  $y \in B_r(a)$,
\begin{equation}\label{2.8}
  V_\kappa (a,r) s^{N+ 2\gamma} \leq  C V_\kappa (y,s) r^{N+ 2\gamma}.
\end{equation}
\end{itemize}
\end{proposition}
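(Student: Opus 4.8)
The plan is to establish (i) by a direct computation on the defining integral and then to read off (ii) and (iii) as elementary consequences.

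\emph{Part (i).} The upper bound is immediate: since $|\alpha|^2=2$, for $x\in B_r(a)$ one has $|\langle\alpha,x\rangle|\le|\langle\alpha,a\rangle|+\sqrt2\,r\le\sqrt2\,(|\langle\alpha,a\rangle|+r)$, whence $V_\kappa(a,r)=\int_{B_r(a)}\prod_{\alpha\in\mathcal R_+}|\langle\alpha,x\rangle|^{2\kappa(\alpha)}\,dx\le C\,r^N\prod_{\alpha\in\mathcal R_+}(|\langle\alpha,a\rangle|+r)^{2\kappa(\alpha)}$ with $C$ depending only on $N$ and $\kappa$. For the lower bound I would first note that both sides of (\ref{2.6}) are unchanged if $a$ is replaced by $\sigma a$, $\sigma\in G$ (the left side because $\mu_\kappa$ is $G$-invariant and reflections are Euclidean isometries, the right side because $G$ permutes $\mathcal R$ up to sign and $\kappa$ is $G$-invariant), so one may assume $a$ lies in the closed fundamental chamber, i.e. $\langle\alpha,a\rangle\ge0$ for all $\alpha\in\mathcal R_+$. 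Fix once and for all a unit vector $e$ in the open chamber and set $c_0:=\min_{\alpha\in\mathcal R_+}\langle\alpha,e\rangle>0$, a constant depending only on $\mathcal R$. Put $b:=a+\tfrac r2 e$ and $\rho:=\tfrac r4\min\{1,c_0/\sqrt2\}$. Then $B_\rho(b)\subset B_r(a)$, and for $x\in B_\rho(b)$ we have $\langle\alpha,b\rangle=\langle\alpha,a\rangle+\tfrac r2\langle\alpha,e\rangle\ge\tfrac r2 c_0$, hence
\[
|\langle\alpha,x\rangle|\ \ge\ \langle\alpha,b\rangle-\sqrt2\,\rho\ \ge\ \tfrac12\langle\alpha,b\rangle\ \ge\ c_1\,(|\langle\alpha,a\rangle|+r),
\]
where $c_1:=\tfrac12\min\{1,c_0/2\}$ depends only on $\mathcal R$ (using $\langle\alpha,a\rangle+\tfrac r2 c_0\ge\min\{1,c_0/2\}(\langle\alpha,a\rangle+r)$ and $\langle\alpha,a\rangle=|\langle\alpha,a\rangle|$). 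Since the Lebesgue volume of $B_\rho(b)$ is $\approx r^N$, integrating this pointwise bound over $B_\rho(b)$ yields $V_\kappa(a,r)\ge c\,r^N\prod_{\alpha\in\mathcal R_+}(|\langle\alpha,a\rangle|+r)^{2\kappa(\alpha)}$.

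\emph{Parts (ii) and (iii).} Granting (i), for $r\ge s>0$ we get
\[
\frac{V_\kappa(a,r)}{V_\kappa(a,s)}\ \approx\ \Big(\frac rs\Big)^N\prod_{\alpha\in\mathcal R_+}\Big(\frac{|\langle\alpha,a\rangle|+r}{|\langle\alpha,a\rangle|+s}\Big)^{2\kappa(\alpha)},
\]
and since $1\le\frac{t+r}{t+s}\le\frac rs$ for every $t\ge0$ while $\sum_{\alpha\in\mathcal R_+}2\kappa(\alpha)=2\gamma$, the product lies between $1$ and $(r/s)^{2\gamma}$; this is exactly (\ref{2.7}). For (\ref{2.8}), if $y\in B_r(a)$ and $0<s\le r$ then $B_r(a)\subset B_{2r}(y)$, so $V_\kappa(a,r)\le V_\kappa(y,2r)$, and applying the upper estimate of (ii) with the radii $2r\ge s$ gives $V_\kappa(y,2r)\le C(2r/s)^{N+2\gamma}V_\kappa(y,s)$; rearranging yields $V_\kappa(a,r)\,s^{N+2\gamma}\le C\,V_\kappa(y,s)\,r^{N+2\gamma}$.

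\emph{Expected main obstacle.} The only step requiring genuine care is the lower bound in (i). The difficulty is that, for a fixed center $a$, one cannot in general find a single sub-ball of $B_r(a)$ on which $|\langle\alpha,x\rangle|$ is comparable to $|\langle\alpha,a\rangle|+r$ for \emph{all} positive roots at once, because the signs of the various $\langle\alpha,a\rangle$ are coupled and a shift of the center helpful for one root may be harmful for another. Reducing to the fundamental chamber and translating along a vector $e$ that is strictly positive on every positive root is precisely what resolves this; after that, (ii) and (iii) are purely arithmetic manipulations of (\ref{2.6}) together with the trivial monotonicity $B_r(a)\subset B_{2r}(y)$.
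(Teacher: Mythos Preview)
Your proof is correct and, for parts (i) and (ii), follows essentially the same line as the paper: both reduce the lower bound in (i) to the closed Weyl chamber via $G$-invariance and then locate a subregion of $B_r(a)$ that is pushed uniformly into the open chamber so that every $|\langle\alpha,x\rangle|$ is comparable to $|\langle\alpha,a\rangle|+r$. The paper does this in polar coordinates (restricting the angular variable to a sector $\Sigma\subset S^{N-1}$ inside the open chamber and the radial variable to $[1/2,1]$, after scaling to $r=1$), whereas you use the sub-ball $B_\rho(a+\tfrac r2 e)$; these are two packagings of the same geometric idea, and your version has the minor advantage of avoiding the separate scaling reduction.

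For part (iii) you take a genuinely different and slightly slicker route. The paper argues directly from formula (\ref{2.6}), bounding each factor $\frac{|\langle\alpha,a\rangle|+r}{|\langle\alpha,y\rangle|+s}$ by a constant times $r/s$ using $|\langle\alpha,a\rangle|\le|\langle\alpha,y\rangle|+\sqrt2\,r$. You instead use the trivial inclusion $B_r(a)\subset B_{2r}(y)$ to get $V_\kappa(a,r)\le V_\kappa(y,2r)$ and then feed this into the already-proved upper bound of (ii) at the center $y$. Your argument is cleaner in that it treats (iii) as a formal corollary of (ii) and the doubling-type monotonicity, with no need to reopen the product formula; the paper's argument, on the other hand, is more self-contained and makes the dependence of the constant on $\mathcal R$ and $\kappa$ more explicit.
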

\begin{proof} To prove (\ref{2.6}) fix  $a\in \mathbb{R}^N$ and $r>0$, then using the change of variable $x=a+tu$, $0<t<r$ and $u\in S^{N-1}$, we obtain that the volume
$ V_\kappa(a,r)$ is equal to
\begin{eqnarray}\label{2.9} \int_0^r\int_{S^{N-1}}\prod_{\alpha\in \mathcal{R}_+}\left|<\alpha,a>+t<\alpha,u>\right|^{2\kappa(\alpha)}t^{N-1}
dtd\sigma(u),\end{eqnarray}
where $d\sigma$ is the induced Euclidean  measure on the unit sphere $S^{N-1}$.   Thus using the elementary
estimate $|<\alpha,a>|+\sqrt{2}r\leq \sqrt{2}{\left(|<\alpha,a>|+r\right)}$,  we obtain
\begin{eqnarray}\label{2.10}V_\kappa(a,r)  &\leq &
C r^N\prod_{\alpha\in \mathcal{R}_+}(|<\alpha,a>|+r)^{2\kappa(\alpha)}.
\end{eqnarray}

Let now establish a similar lower estimate for $V_\kappa(a,r)$. Using the fact that $\mathcal{R}$ is invariant with respect to the action of the Weyl group $G$, we obtain
\begin{equation}\label{2.11}
    V_\kappa(ga,r)=V_\kappa(a,r),\quad g\in G.
\end{equation}
Property (\ref{2.11}) combined with the scaling property (\ref{2.1}) show that it suffices to estimate $V_\kappa(a,1)$. Let $\mathcal{C}=\left\{x\in \mathbb{R}^N  \,\,:\; <\alpha,x>\, >0, \,\, \alpha\in \mathcal{R}_+ \right\}$
denote the positive Weyl chamber associated to the root system $\mathcal{R}$.  According to (\ref{2.11}) we can suppose that $a\in\overline{\mathcal{C}}$. So by
 (\ref{2.9}) we obtain
$$V_\kappa(a,1)\geq \int_0^1\int_{\Sigma}\prod_{\alpha\in \mathcal{R}_+}\left(<\alpha,a>+t<\alpha,u>\right)^{2\kappa(\alpha)}t^{N-1}dtd\sigma(u),$$
where $\Sigma\subset S^{N-1}$ is chosen such that $\gamma_0=\min\{<\alpha,u>, \alpha\in \mathcal{R}_+, u\in \Sigma\}>0$,   so that
\begin{eqnarray}\label{2.12}
V_\kappa(a,1)&\geq & \int_{\frac{1}{2}}^1\int_{\Sigma}\prod_{\alpha\in \mathcal{R}_+}\left(<\alpha,a>+t\gamma_0 \right)^{2\kappa(\alpha)}t^{N-1}dtd\sigma(u) \nonumber\\ &\geq &
\frac{1}{C}\prod_{\alpha\in \mathcal{R}_+}\left(<\alpha,a>+1 \right)^{2\kappa(\alpha)}.
\end{eqnarray}
Combining  (\ref{2.10}) and (\ref{2.12}) we deduce (\ref{2.6}).
It follows in particular that for $0<s\leq r$, we have
$$ \frac{V_\kappa(a,r)}{V_\kappa(a,s)}\approx \left(\frac{r}{s}\right)^N\prod_{\alpha\in \mathcal{R}_+}\left(\frac{|<\alpha,a>|+r}{|<\alpha,a>|+s}\right)^{2\kappa(\alpha)}. $$
The claim ii) follows from the fact that for $\lambda \geq 0$,
$\lambda + s<\lambda+r< \frac{r}{s}(\lambda +s)$.
Choosing $r=2s$ in (\ref{2.7}) shows that
the measure $\mu_\kappa$ satisfies the doubling property, i.e., there exists a positive constant $C$ such that for every $a\in \mathbb{R}^N$ and for every $r>0$
\begin{equation}\label{2.13}
    V_\kappa(a,2r)\leq CV_\kappa(a,r).
\end{equation}

Finally, let us prove (\ref{2.8}). Using (\ref{2.6}) we see that (\ref{2.8}) is equivalent to
 $$ s^{2\gamma}\prod_{\alpha\in \mathcal{R}_+}(|<\alpha,a>|+r)^{2\kappa(\alpha)}
 \leq C r^{2\gamma}\prod_{\alpha\in \mathcal{R}_+}(|<\alpha,y>|+s)^{2\kappa(\alpha)}$$
 which is to show that
 $$ \prod_{\alpha\in \mathcal{R}_+}\left(\frac{|<\alpha,a>|+r
 }{|<\alpha,y>|+s}\right)^{2\kappa(\alpha)}
 \leq C \left(\frac{r}{s}\right)^{2\gamma}.$$
 The assertion (\ref{2.8}) follows from the fact that $|<\alpha,a>|+s   \leq \sqrt 2 r  +|<\alpha,y>| + s $ and that $r \geq s$.
\end{proof}

\section{ Pseudo-Poincar\'e and Nash-type inequalities}

In this section,
we establish pseudo-Poincar\'e
 inequalities and we make a detailed study
 of   Nash-type inequalities (\ref{1.4}) and (\ref{1.5}). For this purpose we also  get operator
  norm estimates for the family
 $\{T_{t^2},\,\,\,0<t\leq r\}$  acting from
 $ L^1\left( B, d\mu_\kappa\right) \rightarrow
  L^p\left( \mathbb{R}^N, d\mu_\kappa\right)$, for $B\subset  \mathbb{R}^N$,
 $p\geq 1$.
\begin{proposition}\label{prop2} \textit{(Pseudo-Poincar\'{e} inequality).} For any $1\leq p<\infty$, there exists a constant $C > 0$ such that for all  $f\in
\mathcal{C}_0^\infty(\mathbb{R}^N)$, $t>0$, we have
\begin{equation}\label{3.1}
  \|f- T_t f\|_{p,\kappa} \leq  C \sqrt t \|\nabla_\kappa f \|_{p,\kappa}.
\end{equation}
\end{proposition}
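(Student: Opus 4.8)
The plan is to write $f - T_t f$ as a time integral of the generator applied to the heat semigroup and then estimate pointwise using the gradient heat-kernel bound (\ref{2.5}). Concretely, since $T_0 f = f$ and $\frac{d}{ds} T_s f = -\Delta_\kappa T_s f$, we have the identity
\begin{equation*}
  f(x) - T_t f(x) = \int_0^t \Delta_\kappa T_s f(x)\, ds.
\end{equation*}
The key point is that $\Delta_\kappa T_s = \nabla_\kappa^\ast \cdot \nabla_\kappa\, T_s$ in the sense that, writing out the convolution, $\Delta_\kappa T_s f(x) = \int_{\mathbb{R}^N} \Delta_{\kappa,x} h_s(x,y) f(y)\, d\mu_\kappa(y)$, and one wants instead to shift one derivative onto $f$. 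Using skew-symmetry of the Dunkl operators $T_j$ with respect to $d\mu_\kappa$ and the fact that $\frac{d}{ds}h_s(x,y) = -\Delta_{\kappa,y} h_s(x,y) = -\Delta_{\kappa,x}h_s(x,y)$, one rewrites
\begin{equation*}
  f(x) - T_t f(x) = -\int_0^t \int_{\mathbb{R}^N} \sum_{j=1}^N \bigl(T_{j,x} h_s(x,y)\bigr)\, T_j f(y)\, d\mu_\kappa(y)\, ds,
\end{equation*}
so that $|f(x) - T_t f(x)| \leq \int_0^t \int_{\mathbb{R}^N} |\nabla_{\kappa,x} h_s(x,y)|\, |\nabla_\kappa f(y)|\, d\mu_\kappa(y)\, ds$.

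**Next I would** estimate the $L^p_\kappa$ norm in $x$ of this double integral by Minkowski's integral inequality, reducing matters to showing that the operator with kernel $\int_0^t |\nabla_{\kappa,x} h_s(x,y)|\, ds$ is bounded on $L^p(\mathbb{R}^N, d\mu_\kappa)$ with norm $O(\sqrt t)$, uniformly. By the gradient estimate (\ref{2.5}),
\begin{equation*}
  \int_0^t |\nabla_{\kappa,x} h_s(x,y)|\, ds \leq C \int_0^t \frac{1}{\sqrt s\, \max\{V_\kappa(x,\sqrt s), V_\kappa(y,\sqrt s)\}} e^{-d(x,y)^2/(Cs)}\, ds =: K_t(x,y).
\end{equation*}
Since $\max\{V_\kappa(x,\sqrt s), V_\kappa(y,\sqrt s)\} \geq V_\kappa(x,\sqrt s)$ and, symmetrically, $\geq V_\kappa(y,\sqrt s)$, the kernel $K_t$ is dominated by the standard ``heat-kernel-gradient'' kernel associated to the doubling space $(\mathbb{R}^N, d, \mu_\kappa)$ — here one uses the doubling property (\ref{2.13}) together with the volume comparison (\ref{2.7}). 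A Schur-test argument then gives the $L^p$ bound: one checks that $\int_{\mathbb{R}^N} K_t(x,y)\, d\mu_\kappa(y) \leq C\sqrt t$ uniformly in $x$ (and the symmetric estimate in $x$), by splitting the $y$-integral into the dyadic annuli $d(x,y) \approx 2^k\sqrt s$ and using that $\mu_\kappa(B^G_{2^k\sqrt s}(x)) \lesssim 2^{k(N+2\gamma)} V_\kappa(x,\sqrt s)$ against the Gaussian factor $e^{-4^k/C}$, then performing the $\int_0^t s^{-1/2}\,ds = 2\sqrt t$ integration. Schur's lemma upgrades the two $L^1$-kernel bounds to an $L^p \to L^p$ bound for every $1 \leq p \leq \infty$, in particular for $1 \leq p < \infty$, with operator norm $\leq C\sqrt t$.

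**The main obstacle** I anticipate is the justification of the integration-by-parts identity transferring a Dunkl derivative from the kernel onto $f$, since the Dunkl operators contain a nonlocal difference term: one must be careful that $T_{j}$ acting in the $x$-variable on $h_s(x,y)$ and $T_j$ acting on $f(y)$ pair correctly under $\int \cdot\, d\mu_\kappa(y)$, which relies on the radiality of $h_s$, the relation $\Delta_{\kappa,x}h_s = \Delta_{\kappa,y}h_s$, and on decay of $h_s$ and its derivatives at infinity (guaranteed by (\ref{2.4}), (\ref{2.5})) so that no boundary-type terms appear; since $f \in \mathcal{C}_0^\infty$, the $y$-integrals are over a compact set and all manipulations are legitimate. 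A secondary technical point is ensuring all the Gaussian-times-volume integrals converge at $s \to 0^+$ despite the $s^{-1/2}$ singularity — this is exactly what the $\int_0^t s^{-1/2}\,ds$ finiteness takes care of, and it is the source of the $\sqrt t$ factor. Once the Schur bounds are in hand, combining them with Minkowski's inequality completes the proof of (\ref{3.1}).
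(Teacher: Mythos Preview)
Your proposal is correct and follows essentially the same route as the paper: both write $f-T_tf=\int_0^t\Delta_\kappa T_sf\,ds$, shift one Dunkl derivative onto $f$ via skew-symmetry, invoke the gradient heat-kernel bound (\ref{2.5}), and close with a Schur-test/dyadic-annulus argument yielding the factor $\int_0^t s^{-1/2}\,ds=2\sqrt{t}$. The only cosmetic difference is that the paper dualizes against a test function $g$ with $\|g\|_{p',\kappa}=1$ and bounds $\|\nabla_\kappa T_s g\|_{p',\kappa}\leq Cs^{-1/2}$, whereas you work directly with the integrated kernel $K_t(x,y)=\int_0^t|\nabla_{\kappa,x}h_s(x,y)|\,ds$; the underlying estimates are identical.
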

\par For the proof of Proposition \ref{prop2} we need the following lemma
\begin{lemma}\label{lem} \textit{(Schur's test) }
Assume that $k$ is a measurable function on $\mathbb{R}^{N}$ that
satisfies the mixed-norm conditions:$$ C_{1}=\sup_{x\in
\mathbb{R}^{N}}\int|k(x,y)|d\mu_{\kappa}(y)<\infty, \ \ \
C_{2}=\sup_{y\in
\mathbb{R}^{N}}\int|k(x,y)|d\mu_{\kappa}(x)<\infty.$$ Then the
integral operator induced by the kernel $k(x,y)$ (i.e. the operator
defined by $\displaystyle{T_{k}f(x)=\int k(x,y) f(y)
d\mu_\kappa(y)}$) defines a bounded mapping of
$L^{p}(\mathbb{R}^{N}, d\mu_\kappa)$ into itself for every $1\leq
p \leq\infty,$ with $$
\|T_{k}\|_{L^{p}(\mathbb{R}^{N}, d\mu_\kappa)\rightarrow
L^{p}(\mathbb{R}^{d},d\mu_\kappa)}\leq
C_{1}^{1-\frac{1}{p}}C_{2}^{\frac{1}{p}}.$$
\end{lemma}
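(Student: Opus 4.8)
The plan is to prove the operator bound by a direct application of Hölder's inequality combined with Tonelli's theorem, handling first the range $1<p<\infty$ and then the two endpoints $p=1$ and $p=\infty$ separately.

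For $1<p<\infty$, with $p'$ the conjugate exponent, I would split the kernel inside the integral defining $T_kf(x)$ as $|k(x,y)|=|k(x,y)|^{1/p'}\,|k(x,y)|^{1/p}$ and apply Hölder's inequality in the $y$ variable with exponents $p'$ and $p$. The first factor produces $\bigl(\int|k(x,y)|\,d\mu_\kappa(y)\bigr)^{1/p'}\le C_1^{1/p'}$, uniformly in $x$, while the second factor is $\bigl(\int|k(x,y)|\,|f(y)|^p\,d\mu_\kappa(y)\bigr)^{1/p}$. Hence $|T_kf(x)|\le C_1^{1/p'}\bigl(\int|k(x,y)|\,|f(y)|^p\,d\mu_\kappa(y)\bigr)^{1/p}$ for a.e. $x$, which in particular shows that the finiteness of $C_1,C_2$ together with $f\in L^p$ makes $T_kf$ well defined a.e.

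Next I would raise this pointwise inequality to the power $p$ and integrate it in $x$ against $d\mu_\kappa$. Since the integrand $|k(x,y)|\,|f(y)|^p$ is nonnegative, Tonelli's theorem permits interchanging the order of integration; the inner integral $\int|k(x,y)|\,d\mu_\kappa(x)\le C_2$ factors out, leaving $\int|T_kf(x)|^p\,d\mu_\kappa(x)\le C_1^{p/p'}C_2\,\|f\|_{p,\kappa}^p$. Taking $p$-th roots yields $\|T_kf\|_{p,\kappa}\le C_1^{1/p'}C_2^{1/p}\,\|f\|_{p,\kappa}=C_1^{1-1/p}C_2^{1/p}\,\|f\|_{p,\kappa}$, which is the asserted estimate.

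Finally I would dispatch the endpoints, where the Hölder step degenerates. For $p=\infty$ one bounds directly $|T_kf(x)|\le\|f\|_{\infty,\kappa}\int|k(x,y)|\,d\mu_\kappa(y)\le C_1\|f\|_{\infty,\kappa}$, matching $C_1^{1-1/p}C_2^{1/p}=C_1$. For $p=1$ one applies Tonelli immediately to get $\int|T_kf(x)|\,d\mu_\kappa(x)\le\int|f(y)|\bigl(\int|k(x,y)|\,d\mu_\kappa(x)\bigr)d\mu_\kappa(y)\le C_2\|f\|_{1,\kappa}$, matching $C_1^{1-1/p}C_2^{1/p}=C_2$. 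There is no genuine obstacle in this argument; the only points deserving a word of justification are the applicability of Tonelli's theorem, which is guaranteed by the nonnegativity of the integrands and the $\sigma$-finiteness of $\mu_\kappa$, and the remark that the Hölder bound in the middle step is finite for a.e. $x$, so that $T_kf$ is a bona fide function and not merely an expression bounded in norm.
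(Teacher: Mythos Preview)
Your argument is correct and is the standard textbook proof of Schur's test: split $|k(x,y)|=|k(x,y)|^{1/p'}|k(x,y)|^{1/p}$, apply H\"older in $y$, then integrate in $x$ and swap the order via Tonelli, with the endpoints handled directly.

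There is nothing to compare against, however: the paper does not supply a proof of this lemma. The \texttt{proof} environment that immediately follows the lemma statement in the paper is in fact the proof of Proposition~\ref{prop2} (the pseudo-Poincar\'e inequality), not of Schur's test; the lemma itself is merely quoted as a classical tool. So your write-up actually fills a gap the authors left implicit, and does so by the canonical route.
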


\begin{proof} We note that
 \begin{equation}\label{3.2}
   T_t f -f = \int_0^t \frac{\partial}{\partial s} T_s f ds =
 \int_0^t \Delta_\kappa T_s f ds.\end{equation}
 Fix $p\geq 1$. To estimate the  $L^p(d\mu_\kappa)$-norm of the difference
 (\ref{3.2}), fix $g\in \mathcal{C}_0^\infty(\mathbb{R}^N)$ satisfying
$\|g\|_{p',\kappa}=1$,  where  $p'$ denotes the  H\"older conjugate exponent  of $p$. Integrating (\ref{3.2}) against $g$ and using
 the symmetry of the semi-group $\left(T_t\right)_{t\geq 0}$ yield
 $$ \int \left(T_t f -f\right) g d\mu_\kappa =  \int \left(T_t g -g\right) f d\mu_\kappa = \int_0^t \int
< \nabla_\kappa T_sg, \nabla_\kappa f >d\mu_\kappa ds .$$
 It follows that
  $$\left|\int \left(T_t f -f\right) g d\mu_\kappa \right| \leq \int_0^t
\| \nabla_\kappa T_sg \|_{p',\kappa}\|\nabla_\kappa f \|_{p,\kappa} ds .$$
Thanks to Gaussian estimates (\ref{2.5}) we know that, for any $x\in \mathbb{R}^N$
and $s>0$,
$$ \|\nabla_\kappa h_s(x,.)\|_{1,\kappa} \leq \frac{C}{\sqrt{s}}\int_{\mathbb{R}^N}
\frac{1}{V_\kappa(x, \sqrt{s})} \exp\left(-\frac{cd(x,y)^2}{s}\right)d\mu_\kappa(y).
$$
 A dyadic decomposition on the annulus
 $$B_{2^{j+1} \sqrt s}^G(a) \setminus  B_{2^{j} \sqrt s}^G(a) =
 \left\{ 2^{j} \sqrt s\leq
 d(x,y)\leq  2^{j+1} \sqrt s\right\}, j\in \mathbb{N},$$
 shows that
 $$ \|\nabla_\kappa h_s(x,.)\|_{1,\kappa} \leq Cs^{-1/2} \left[  1 +\sum_{j=0}^\infty\frac{V_\kappa(x, 2^{j+1}\sqrt s)}{V_\kappa(x, 2^{j}\sqrt s) }
 e^{-c 2^{2j}} \right] \leq C s^{-1/2},$$
 because of (\ref{2.2}) and  the doubling volume property (\ref{2.13}). Using Schur's test and taking the supremum over  all  functions $g$ satisfying $\|g\|_{p',\kappa}=1$ give
 (\ref{3.1}). \end{proof}

 Let us now estimate the operator norm of each of the elements of the family
 of operators $\{T_{t^2},\,\,\,0<t\leq r\}$  acting from
 $L^1\left( B_r(a), d\mu_\kappa\right) \rightarrow L^p\left( \mathbb{R}^N, d\mu_\kappa\right)$, for $a\in \mathbb{R}^N$,
  $r>0$ and $p\geq 1$ fixed.
 \begin{proposition}\label{prop3} For any $1\leq p<\infty$, there exists a constant $c>0$ such that for all  $a\in \mathbb{R}^N$, $r>0$, we have
\begin{equation}\label{3.3}
 \| T_{t^2}\|_{L^1\left( B_r(a), d\mu_\kappa\right) \rightarrow
  L^p\left( \mathbb{R}^N, d\mu_\kappa\right)}
 \leq \frac{C}{\left(V_\kappa(a,r)\right)^{\frac{1}{p'}}}
  \left(\frac{r}{t}\right)^{\frac{N+2\gamma}{p'}},\quad 0<t\leq r,\end{equation}
  where $p'$ denotes the H\"older conjugate exponent of $p$.
\end{proposition}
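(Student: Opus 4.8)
The plan is to estimate the operator norm by duality together with a pointwise bound on the heat kernel. Since $T_{t^2}$ acts by the kernel $h_{t^2}(x,y)$ (see (\ref{2.3})), for $f\in L^1(B_r(a),d\mu_\kappa)$ one has $T_{t^2}f(x)=\int_{B_r(a)}h_{t^2}(x,y)f(y)\,d\mu_\kappa(y)$, so that by Minkowski's integral inequality
\begin{equation*}
\|T_{t^2}f\|_{p,\kappa}\leq \int_{B_r(a)}\|h_{t^2}(\cdot,y)\|_{p,\kappa}\,|f(y)|\,d\mu_\kappa(y)\leq \Big(\sup_{y\in B_r(a)}\|h_{t^2}(\cdot,y)\|_{p,\kappa}\Big)\,\|f\|_{1,\kappa}.
\end{equation*}
Thus it suffices to bound $\|h_{t^2}(\cdot,y)\|_{p,\kappa}$ uniformly for $y\in B_r(a)$ by the right-hand side of (\ref{3.3}).

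First I would estimate the $L^p$-norm of $h_{t^2}(\cdot,y)$ using the Gaussian upper bound (\ref{2.4}): writing $h_{t^2}(x,y)\leq C\,V_\kappa(y,t)^{-1}e^{-d(x,y)^2/(Ct^2)}$, raising to the power $p$ and integrating in $x$ against $d\mu_\kappa$, a dyadic decomposition on the orbit annuli $B^G_{2^{j+1}t}(y)\setminus B^G_{2^j t}(y)$ exactly as in the proof of Proposition \ref{prop2}, combined with (\ref{2.2}) and the doubling property (\ref{2.13}), gives
\begin{equation*}
\|h_{t^2}(\cdot,y)\|_{p,\kappa}\leq \frac{C}{V_\kappa(y,t)}\,\big(\mu_\kappa(B^G_{Ct}(y))\big)^{1/p}\leq \frac{C}{V_\kappa(y,t)^{1/p'}}=\frac{C}{V_\kappa(y,t)^{1-1/p}}.
\end{equation*}
So the task reduces to replacing $V_\kappa(y,t)^{-1/p'}$ by $V_\kappa(a,r)^{-1/p'}(r/t)^{(N+2\gamma)/p'}$, i.e. to proving the volume comparison $V_\kappa(a,r)\,t^{N+2\gamma}\leq C\,V_\kappa(y,t)\,r^{N+2\gamma}$ for $y\in B_r(a)$ and $0<t\leq r$. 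But this is precisely assertion (\ref{2.8}) of Proposition \ref{prop1} with $s=t$. Raising (\ref{2.8}) to the power $1/p'$ and combining with the previous display yields
\begin{equation*}
\|h_{t^2}(\cdot,y)\|_{p,\kappa}\leq \frac{C}{V_\kappa(a,r)^{1/p'}}\Big(\frac{r}{t}\Big)^{(N+2\gamma)/p'},\qquad y\in B_r(a),
\end{equation*}
which is uniform in $y\in B_r(a)$, and (\ref{3.3}) follows.

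The only mildly delicate point is the dyadic-annulus estimate of $\|h_{t^2}(\cdot,y)\|_{p,\kappa}$: one must check that the geometric series $\sum_j \big(V_\kappa(y,2^{j+1}t)/V_\kappa(y,2^j t)\big)e^{-c2^{2j}}$ converges, which is immediate from doubling since each ratio is bounded by a fixed constant while the exponential decays super-geometrically. Everything else is a direct application of results already proved in the excerpt — the Gaussian bound (\ref{2.4}) and the volume inequality (\ref{2.8}) — so there is no real obstacle; the proposition is essentially a packaging of those two ingredients via duality.
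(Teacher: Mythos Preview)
Your argument is correct and rests on the same two ingredients as the paper --- the Gaussian upper bound (\ref{2.4}) and the volume comparison (\ref{2.8}) --- but the route you take to combine them is slightly different. The paper does not compute $\|h_{t^2}(\cdot,y)\|_{p,\kappa}$ at all: instead it interpolates between the $L^1\!\to\! L^1$ contraction bound and the $L^1\!\to\! L^\infty$ bound $\sup_{x,\,y\in B_r(a)}h_{t^2}(x,y)$, obtaining directly
\[
\|T_{t^2}\|_{L^1(B_r(a))\to L^p}\le\Big(\sup_{x\in\mathbb{R}^N,\;y\in B_r(a)}h_{t^2}(x,y)\Big)^{1/p'},
\]
and then bounds that supremum by $C\,V_\kappa(y,t)^{-1}\le C\,V_\kappa(a,r)^{-1}(r/t)^{N+2\gamma}$ via (\ref{2.4}) and (\ref{2.8}). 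This sidesteps your dyadic integration entirely. Your approach is equally valid and perhaps more self-contained, since it does not invoke the $L^1$-contractivity of the semigroup; the paper's is shorter because the pointwise supremum bound already encodes what your annular decomposition extracts. One small remark: the series you display at the end should really be $\sum_j\big(V_\kappa(y,2^{j+1}t)/V_\kappa(y,t)\big)e^{-c2^{2j}}$ rather than ratios of consecutive volumes, but the convergence follows from (\ref{2.7}) just as you say.
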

\begin{proof}
An obvious interpolation argument shows that
 \begin{equation}\label{3.4}
    \| T_{t^2}\|_{L^1\left( B_r(a), d\mu_\kappa\right) \rightarrow
  L^p\left( \mathbb{R}^N, d\mu_\kappa\right)}
 \leq \left( \sup\left\{h_{t^2}(x,y), \,\,\,x\in \mathbb{R}^N,\, y \in  B_r(a)\right\}\right)^{
 \frac{1}{p'}}. \end{equation}
 Let $x\in \mathbb{R}^N$, $y \in B_r(a) $ and $0<t\leq r$. Thanks to (\ref{2.4}) and \eqref{2.8}
 $$ h_{t^2}(x,y)  \leq \frac{C}{V_\kappa(y,t)} \leq \frac{C}{V_\kappa(a,r)}
  \left(\frac{r}{t}\right)^{N+2\gamma}.$$
  Hence it follows that:
  \begin{equation}\label{3.5}
 \sup\left\{h_{t^2}(x,y), \,\,\,x\in \mathbb{R}^N,\, y \in  B_r(a)\right\}
  \leq \frac{C}{V_\kappa(a,r)}
  \left(\frac{r}{t}\right)^{N+2\gamma}.\end{equation}
 The proof of (\ref{3.3}) is complete, by applying (\ref{3.4}).
 \end{proof}

Now, we will show how pseudo-Poincar\'e inequality (\ref{3.1})
and the estimation (\ref{3.3}) lead  to  Nash-type inequalities
(\ref{1.4}) and (\ref{1.5}).\\

{\it Proof of Theorem \ref{thm1}. } Fix $1< p<\infty$, $a\in \mathbb{R}^N$, $r>0$ and
   $f\in \mathcal{C}_0^\infty(\mathbb{R}^N) $. Let $0<t \leq r$. Write
$$ \|f\|_{p, \kappa} \leq \| f - T_{t^2} f \|_{p, \kappa} + \| T_{t^2} f \|_{p, \kappa}.$$
Using (\ref{3.1}) and (\ref{3.3}) we obtain
$$ \|f\|_{p, \kappa} \leq Ct \| \nabla_\kappa  f \|_{p, \kappa} +
C \left(\frac{r^{N+2\gamma}}{V_\kappa(a,r)}\right)^{\frac{1}{p'}} t^{-\frac{N+2\gamma}{p'}}
\| f \|_{1, \kappa},$$
where  $p'$ denotes the  H\"older conjugate exponent  of $p$.
Combining with the obvious estimate  $\|f\|_{p, \kappa} \leq t \|f\|_{p, \kappa}
/r$ which is valid for any $t>r$, we deduce that for any $t>0$
$$ \|f\|_{p, \kappa} \leq Ct \left[ \| \nabla_\kappa  f \|_{p, \kappa} +
\frac{1}{r} \|f\|_{p, \kappa}  \right] +
C \left(\frac{r^{N+2\gamma}}{V_\kappa(a,r)}\right)^{\frac{1}{p'}} t^{-\frac{N+2\gamma}{p'}}
\| f \|_{1, \kappa}.$$
Optimizing over $t>0$ yields
$$\|f\|_{p,\kappa}^{1+\frac{p'}{N+2\gamma}}\leq \frac{C r}{(V_\kappa(a,r))^{\frac{1}{N+2\gamma}}}\left(\|\nabla_\kappa f\|_{p,\kappa}+\frac{1}{r}\|f\|_{p,\kappa}\right)\|f\|_{1,\kappa}^{\frac{p'}{N+2\gamma}}.$$

{\it   Proof of Theorem \ref{thm2}.}  Fix $f\in\mathcal{C}_0^\infty(\mathbb{R}^N)$
 and  a real $\lambda>0$.
For any $0<t\leq r$, write
$$ \mu_\kappa \{ |f|\geq \lambda\} \leq
 \mu_\kappa \{| f - T_{t^2} f| \geq \lambda/2\}  +
  \mu_\kappa \{ T_{t^2} |f | \geq \lambda/2\}.$$
  Assume
that $\lambda \geq 4C \|f\|_{1, \kappa}V_\kappa(a,r)^{-1}$ (where $C$ denotes
the constant that appears in  (\ref{3.5})
and pick $t\leq r$ so that
$$
\lambda = 4C t^{-N+2\gamma} \|f\|_{1, \kappa}V_\kappa(a,r)^{-1} r^{N+2\gamma}.$$
Then  it follows from (\ref{3.5}) that
$\| T_{t^2} |f| \|_\infty $ is dominated by $\lambda/4$. Thus
$$ \mu_\kappa \{ |f|\geq \lambda\} \leq
 \mu_\kappa \{| f - T_{t^2} f| \geq \lambda/2\}
 \leq \frac{2}{\lambda}\|f - T_{t^2} f\|_{1, \kappa, }
  \leq 2Ct \| \nabla_\kappa f \|_{1, \kappa}/ \lambda,$$
  where the last inequality is obtained by applying
  (\ref{3.1}). It follows that
  \begin{equation}\label{3.6}
  \mu_\kappa \{ |f|\geq \lambda\} \leq
   C r V_\kappa (a,r)^{-\frac{1}{N + 2 \gamma}}
 \|f\|_{1, \kappa}^{\frac{1}{N + 2 \gamma}}
\| \nabla_\kappa f \|_{1, \kappa}
 \lambda^{-1-\frac{1}{N + 2 \gamma}}.\end{equation}
 On the other hand if $\lambda < 4C \|f\|_{1, \kappa}V_\kappa(a,r)^{-1}$, we simply write
 $$ \mu_\kappa \{ |f|\geq \lambda\} \leq \|f\|_{1, \kappa}
 \lambda ^{-1},$$ which implies
 $$ \mu_\kappa \{ f\geq \lambda\} \leq
   C V_\kappa (a,r)^{-\frac{1}{N+2\gamma }}
 \|f\|_{1, \kappa}^{1+\frac{1}{N+2\gamma }}
 \lambda^{-1-\frac{1}{N+2\gamma }}.$$ Combining
 with (\ref{3.6}) we deduce the weak Nash inequality
 $$\lambda^{1+\frac{1}{N + 2 \gamma}}
  \mu_\kappa \{ |f|\geq \lambda\} \leq
  \frac{Cr}{ V_\kappa (a,r)^{\frac{1}{N + 2 \gamma}} }
  \left( \|  \nabla_\kappa f \|_{1, \kappa}  +
  \frac{1}{r}\|f\|_{1, \kappa}\right)
\|f\|_{1, \kappa}^{\frac{1}{N + 2 \gamma}},\quad\lambda >0.$$

\noindent\textbf{Remark. }
It is easy to see that the considerations of this section immediately generalize to balls $B_r^G(a)$. In particular they lead to the following Sobolev inequality which will be crucial for the applications of the following section.
 \begin{theorem}\label{thm4}
Let $a\in \mathbb{R}^N$ and $r>0$. Then  there exists a constant $C>0$ independent of $a$
 and $r$  such that for all $f\in  \mathcal{C}_0^\infty(B_r^G( a))$:
\begin{equation}\label{3.7}\left(\int_{B_r^G( a)}|f|^{\frac{2(N+2\gamma)}{N+2\gamma-2}}d\mu_\kappa\right)^{\frac{N+2\gamma-2}{N+2\gamma}}\leq  \frac{Cr^2}{\left(V_\kappa(a,r)\right)^{\frac{2}{N+2\gamma}}}
\int_{B_r^G( a)}
\left(|\nabla_\kappa (f)|^{{2}} +\frac{|f|^2}{r^2}\right)d\mu_\kappa.\end{equation}
 \end{theorem}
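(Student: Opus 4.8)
The plan is to prove~(\ref{3.7}) in two steps: (1) reprove, on the $G$-invariant orbit ball $\Omega=B_r^G(a)$ in place of a Euclidean ball, the $p=2$ case of the relative Nash--Dunkl inequality, with $V_\kappa(a,r)$ playing the role of $v_\kappa(B)$; and (2) pass from this $L^2$ Nash-type inequality to the $L^2$ Sobolev inequality~(\ref{3.7}) via the classical equivalence between the two (recorded between Theorems~\ref{thm2} and~\ref{thm3}, with references \cite{CKS}, \cite{Sa0}, \cite{BCLS}). The structural point that makes the orbit ball the correct domain here is that $\Omega$ is $G$-invariant: for $f\in\mathcal{C}_0^\infty(\Omega)$ and $x\notin\Omega$ we also have $\sigma_\alpha x\notin\Omega$, so the reflection terms in $T_jf(x)$ vanish and $\partial_jf(x)=0$; hence $\nabla_\kappa f$ is again supported in $\Omega$, which is precisely why the gradient integral in~(\ref{3.7}) may be taken over $\Omega$.

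For step~(1) I would first note that $\mu_\kappa(\Omega)\approx V_\kappa(a,r)$ by~(\ref{2.2}), and then upgrade Proposition~\ref{prop3} from $B_r(a)$ to $\Omega$. Indeed, given $y\in\Omega$ choose $\sigma\in G$ with $\sigma y\in B_r(a)$; by $G$-invariance of the heat kernel, $h_{t^2}(x,y)=h_{t^2}(\sigma x,\sigma y)$ and $V_\kappa(y,t)=V_\kappa(\sigma y,t)$, so~(\ref{2.4}) together with~(\ref{2.8}) applied to $\sigma y\in B_r(a)$ yields
\[
\sup\{h_{t^2}(x,y):\ x\in\mathbb{R}^N,\ y\in\Omega\}\leq \frac{C}{V_\kappa(a,r)}\Bigl(\frac{r}{t}\Bigr)^{N+2\gamma},\qquad 0<t\leq r,
\]
and hence, by the interpolation estimate~(\ref{3.4}), $\|T_{t^2}\|_{L^1(\Omega,d\mu_\kappa)\to L^p(\mathbb{R}^N,d\mu_\kappa)}\leq C\,V_\kappa(a,r)^{-1/p'}(r/t)^{(N+2\gamma)/p'}$ for $0<t\leq r$. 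With this in hand, the argument proving Theorem~\ref{thm1} — decompose $\|f\|_{p,\kappa}\leq\|f-T_{t^2}f\|_{p,\kappa}+\|T_{t^2}f\|_{p,\kappa}$, use the pseudo-Poincar\'e inequality~(\ref{3.1}) on the first term and the operator bound above on the second, append the trivial inequality $\|f\|_{p,\kappa}\leq (t/r)\|f\|_{p,\kappa}$ valid for $t\geq r$, and optimize over $t>0$ — goes through verbatim for $f\in\mathcal{C}_0^\infty(\Omega)$. Taking $p=2$ and squaring gives
\[
\|f\|_{2,\kappa}^{2+\frac{4}{N+2\gamma}}\leq \frac{Cr^2}{V_\kappa(a,r)^{\frac{2}{N+2\gamma}}}\Bigl(\int_\Omega\bigl(|\nabla_\kappa f|^2+r^{-2}|f|^2\bigr)d\mu_\kappa\Bigr)\|f\|_{1,\kappa}^{\frac{4}{N+2\gamma}}.
\]

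For step~(2), set $\nu=N+2\gamma$, $\mathcal{E}(f)=\int_\Omega(|\nabla_\kappa f|^2+r^{-2}|f|^2)d\mu_\kappa$ and $A=Cr^2V_\kappa(a,r)^{-2/\nu}$, so that step~(1) reads $\|f\|_{2,\kappa}^{2+4/\nu}\leq A\,\mathcal{E}(f)\,\|f\|_{1,\kappa}^{4/\nu}$. Because $\Omega$ is $G$-invariant, $\mathcal{E}$ is a Dirichlet form on $L^2(\Omega,d\mu_\kappa)$ with associated operator $L=-\Delta_\kappa^\Omega+r^{-2}$ (the Dirichlet Dunkl Laplacian on $\Omega$, shifted by $r^{-2}$), and $e^{-tL}=e^{-t/r^2}P_t^\Omega$ is a symmetric sub-Markovian semigroup which is an $L^1(\Omega,d\mu_\kappa)$-contraction and acts on each $L^q(\Omega,d\mu_\kappa)$. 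One then runs the standard heat-flow argument: for $0\leq f\in\mathcal{C}_0^\infty(\Omega)$ put $u(t)=\|e^{-tL}f\|_{2,\kappa}^2$, so $u'(t)=-2\mathcal{E}(e^{-tL}f)$; the Nash inequality and $L^1$-contractivity give the differential inequality $-u'(t)\gtrsim A^{-1}\|f\|_{1,\kappa}^{-4/\nu}\,u(t)^{1+2/\nu}$, whose integration yields $u(t)\lesssim A^{\nu/2}\|f\|_{1,\kappa}^2\,t^{-\nu/2}$; by self-adjointness and the semigroup property this upgrades to the ultracontractive bound $\|e^{-tL}\|_{L^1\to L^\infty}\lesssim A^{\nu/2}t^{-\nu/2}$ for all $t>0$, which by the Varopoulos equivalence (\cite{CKS}, \cite{BCLS}, \cite{Sa0}) is equivalent to $\|f\|_{2\nu/(\nu-2),\kappa}^2\lesssim A\,\langle Lf,f\rangle=A\,\mathcal{E}(f)$ — and this is exactly~(\ref{3.7}). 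A routine approximation argument extends the Nash inequality from $\mathcal{C}_0^\infty(\Omega)$ to the form domain, as needed for $e^{-tL}f$; alternatively one may simply invoke the cited equivalence directly, as the text does for Theorem~\ref{thm3}.

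The step I expect to require the most care is setting up step~(2) rigorously in the presence of the \emph{non-local} difference part of $\Delta_\kappa$: one must verify that $\mathcal{E}$ is indeed a (regular) Dirichlet form on $L^2(\Omega,d\mu_\kappa)$ and that $e^{-tL}$ enjoys all the properties invoked above — symmetry, sub-Markovianity, $L^1$-contractivity, and boundedness on $L^q(\Omega,d\mu_\kappa)$. All of this rests on the $G$-invariance of $\Omega=B_r^G(a)$, which confines the reflection terms of the Dunkl operators to $\Omega$; the remaining ingredients — integrating the differential inequality, the duality step, and the ultracontractivity-to-Sobolev implication — are classical and insensitive to the domain. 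Finally, as a consistency check, the lower bound $V_\kappa(a,r)\geq c\,r^{N+2\gamma}$ shows that letting $r\to\infty$ in~(\ref{3.7}) recovers the global Dunkl--Sobolev inequality, in agreement with the Remark preceding Theorem~\ref{thm4}.
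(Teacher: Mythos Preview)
Your proposal is correct and follows exactly the route the paper indicates: the paper's ``proof'' of Theorem~\ref{thm4} is the Remark that the arguments of Section~3 (Proposition~\ref{prop3}, the proof of Theorem~\ref{thm1}, and the Nash--Sobolev equivalence cited from \cite{CKS}, \cite{Sa0}) generalize immediately to the $G$-invariant balls $B_r^G(a)$, and you have spelled out precisely this generalization, including the key observation that $G$-invariance of $\Omega$ is what makes~(\ref{2.8}) and the support of $\nabla_\kappa f$ behave. The only minor remark is that for step~(2) you need not set up a Dirichlet semigroup on $\Omega$ at all: the Nash-to-Sobolev passage in \cite{BCLS}, \cite{Sa0} can be carried out by the purely real-variable truncation argument (apply the Nash inequality to the level-set cuts $(|f|-2^k)_+\wedge 2^k$ and sum), which sidesteps entirely the care you flag about the non-local part of $\Delta_\kappa$ on a bounded domain.
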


\section{Mean value inequalities}
In this section we shall derive $L^p$-mean value inequalities using Moser's iterative technique. These inequalities concern
subsolutions of the heat equation  on orbits of balls not necessarily centered on the origin
and are only based on  the Sobolev inequality stated in  (\ref{3.7}).\\

 Let us fix some notations. For $a\in\mathbb{R}^N$, $r>0$, $s\in\mathbb{R}$ and $0<\delta<1$, set
 \begin{eqnarray*}
 % \nonumber to remove numbering (before each equation)
   Q &=& ]s,s+r^2[\times B_r^G(a)\\
   Q_\delta &=& ]s+\delta r^2,s+r^2[\times B_{(1-\delta)r}^G(a)
 \end{eqnarray*}
 and for a function $u:Q\subset\mathbb{R}\times\mathbb{R}^N\rightarrow \mathbb{N}$, and $p\geq 1$, set
 $$\|u\|^p_{p,Q}=\int_s^{s+r^2}\int_{B_r^G(a)}|u(x,t)|^pd\mu_\kappa(x)dt.$$
 Let us prove some auxiliary results.
 \begin{lemma} \label{lem5.1} Let $u$ be a non-negative parabolic subsolution in $Q$,
 i.e., $u$ satisfies
 $$\left(\frac{\partial}{\partial t}-\Delta_\kappa \right)u\leq 0$$
 in $Q$. Then
  for all $p\geq 2$, $(x,t) \in Q \rightarrow u^p(x,t)$ is also a non-negative subsolution.
 \end{lemma}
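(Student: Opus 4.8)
The plan is to reduce the statement to a pointwise differential inequality for $u^p$, exploiting the explicit formula for the Dunkl Laplacian recalled in Section~\ref{sec:2}. First I would compute, for a non-negative $\mathcal{C}^2$ function $u$ and $p\ge 2$, the action of $\Delta_\kappa$ on $u^p$. Writing $\Delta_\kappa = \Delta + 2\sum_{\alpha\in\mathcal R^+}\kappa(\alpha)\bigl(\tfrac{\langle\nabla f,\alpha\rangle}{\langle\alpha,x\rangle} - \tfrac{f(x)-f(\sigma_\alpha x)}{\langle x,\alpha\rangle^2}\bigr)$, one treats the local (differential) part and the non-local (difference) part separately. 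For the local part, the classical chain rule gives $\Delta(u^p) = p u^{p-1}\Delta u + p(p-1)u^{p-2}|\nabla u|^2$, and likewise $\sum_\alpha \tfrac{\kappa(\alpha)}{\langle\alpha,x\rangle}\langle\nabla(u^p),\alpha\rangle = p u^{p-1}\sum_\alpha \tfrac{\kappa(\alpha)}{\langle\alpha,x\rangle}\langle\nabla u,\alpha\rangle$; combining these, the differential part of $\Delta_\kappa(u^p)$ equals $p u^{p-1}\bigl[\Delta u + 2\sum_\alpha \kappa(\alpha)\tfrac{\langle\nabla u,\alpha\rangle}{\langle\alpha,x\rangle}\bigr] + p(p-1)u^{p-2}|\nabla u|^2$.

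Next I would handle the reflection (difference) terms, which is where the main subtlety lies. The contribution to $\Delta_\kappa(u^p)$ is $-2\sum_\alpha \kappa(\alpha)\tfrac{u(x)^p - u(\sigma_\alpha x)^p}{\langle x,\alpha\rangle^2}$, whereas $p u^{p-1}$ times the corresponding term of $\Delta_\kappa u$ is $-2\sum_\alpha \kappa(\alpha)\, p u(x)^{p-1}\tfrac{u(x)-u(\sigma_\alpha x)}{\langle x,\alpha\rangle^2}$. So the key elementary inequality I need is: for $A = u(x)\ge 0$, $B = u(\sigma_\alpha x)\ge 0$ and $p\ge 1$,
\[
A^p - B^p \le p A^{p-1}(A-B),
\]
which holds by convexity of $t\mapsto t^p$ on $[0,\infty)$ (equivalently $B^p \ge A^p + p A^{p-1}(B-A)$, the tangent-line/subgradient inequality). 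Since $\kappa(\alpha)\ge 0$ and $\langle x,\alpha\rangle^2>0$, dividing by $\langle x,\alpha\rangle^2$ and summing over $\alpha$ preserves the inequality, giving
\[
-2\sum_\alpha \kappa(\alpha)\frac{u(x)^p-u(\sigma_\alpha x)^p}{\langle x,\alpha\rangle^2}
\;\ge\; -2\sum_\alpha \kappa(\alpha)\, p u(x)^{p-1}\frac{u(x)-u(\sigma_\alpha x)}{\langle x,\alpha\rangle^2}.
\]

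Putting the two parts together yields $\Delta_\kappa(u^p) \ge p u^{p-1}\Delta_\kappa u + p(p-1)u^{p-2}|\nabla u|^2 \ge p u^{p-1}\Delta_\kappa u$, the last step using $p\ge 1$, $u\ge 0$ and $|\nabla u|^2\ge 0$ (so for $p\ge 2$ this is automatic; for $1<p<2$ one still has $u^{p-2}\ge 0$ away from the zero set, and a standard $\varepsilon$-regularization $u\mapsto (u^2+\varepsilon^2)^{1/2}$ removes any issue, though for the lemma as stated $p\ge 2$ suffices). Finally, since $u$ is a subsolution, $\partial_t u - \Delta_\kappa u\le 0$, so
\[
\partial_t(u^p) - \Delta_\kappa(u^p) = p u^{p-1}\partial_t u - \Delta_\kappa(u^p) \le p u^{p-1}\partial_t u - p u^{p-1}\Delta_\kappa u = p u^{p-1}\bigl(\partial_t u - \Delta_\kappa u\bigr)\le 0,
\]
and $u^p\ge 0$ is clear, so $u^p$ is a non-negative subsolution in $Q$. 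The main obstacle is precisely the bookkeeping for the non-local reflection terms and verifying that the convexity inequality $A^p-B^p\le pA^{p-1}(A-B)$ is applied with the correct sign after division by the positive quantity $\langle x,\alpha\rangle^2$ and multiplication by $\kappa(\alpha)\ge 0$; everything else is the classical computation.
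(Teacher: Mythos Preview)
Your proof is correct and follows essentially the same route as the paper's: both expand $\Delta_\kappa(u^p)$ via the explicit formula, handle the local part by the chain rule $\Delta(u^p)=pu^{p-1}\Delta u+p(p-1)u^{p-2}|\nabla u|^2$, and control the reflection terms using the convexity (tangent-line) inequality $u^p(\sigma_\alpha x)\ge u^p(x)+pu^{p-1}(x)\bigl(u(\sigma_\alpha x)-u(x)\bigr)$, then combine with $p u^{p-1}(\partial_t u-\Delta_\kappa u)\le 0$. The only cosmetic difference is that the paper computes $\partial_t u^p-\Delta_\kappa u^p$ in one block, whereas you first isolate the inequality $\Delta_\kappa(u^p)\ge p u^{p-1}\Delta_\kappa u$ and then pair it with $\partial_t(u^p)=pu^{p-1}\partial_t u$; the substance is identical.
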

 \begin{proof} One has\\

   $\frac{\partial }{\partial t}u^p-\Delta_\kappa u^p$
   \begin{eqnarray*}
   % \nonumber to remove numbering (before each equation)
      &=& pu^{p-1}\frac{\partial }{\partial t}u-\Delta u^p-2\sum_{\alpha\in \mathcal{R}_+}\kappa(\alpha)\left[pu^{p-1}\frac{<\nabla u(x),\alpha>}{<x,\alpha>}-\frac{u^p(x)-u^p(\sigma_\alpha x)}{<x,\alpha>^2} \right]\\
      &=&  pu^{p-1}\frac{\partial }{\partial t}u-pu^{p-1}\Delta u^p-p(p-1)u^{p-2}|\nabla u|^2\\
      &-&2\sum_{\alpha\in \mathcal{R}_+}\kappa(\alpha)\left[pu^{p-1}\frac{<\nabla u(x),\alpha>}{<x,\alpha>}-\frac{u^p(x)-u^p(\sigma_\alpha x)}{<x,\alpha>^2}\right]\\
      &=&  pu^{p-1}\left(\frac{\partial }{\partial t}-\Delta_\kappa\right)u-p(p-1)|\nabla u|^2\\ &-& 2\sum_{\alpha\in \mathcal{R}_+}\frac{\kappa(\alpha)}{<x,\alpha>^2}\left(pu^{p-1}(x)\left(u(x)-u(\sigma_\alpha x)-u^p(x)+u^p(\sigma_\alpha x)\right)\right).
   \end{eqnarray*}
   Using the fact that $u^p(\sigma_\alpha x)\geq u^p(x)+pu^{p-1}(x)\left(u(\sigma_\alpha x)-u(x)\right)$  ($p$ is greater than $1$),
   we deduce
   $$\left(\frac{\partial }{\partial t}-\Delta_\kappa\right)u^p(x,t)\leq 0,\quad (x,t)\in Q.$$
 \end{proof}
  \begin{proposition}\label{prop4} Let $0<\delta<1$ and let $Q$, $Q_\delta$ and $u$ be as above. Then there exists a positive constant $C$, such that for any $0<\lambda<\eta$ and $p\geq 2$
  \begin{equation}\label{4.1}
     \int_{Q_\eta}u^{2p\theta}d\mu_\kappa dt\leq \frac{Cr^{2(1-\theta)}}
     {\tau^{2(1-\theta)}\left(V_\kappa(a,r)\right)^{\frac{2}{N+2\gamma}}}\left[\int_{Q_\lambda}u^{2p}d\mu_\kappa dt\right]^{\theta}.
   \end{equation}
   where $\theta=\displaystyle 1+\frac{2}{N+2\gamma}$ and $\tau=\eta-\lambda$.
   \end{proposition}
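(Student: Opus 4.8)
The strategy is a single step of Moser's iteration: one derives a Caccioppoli (energy) estimate for the subsolution, feeds it into the Sobolev inequality \eqref{3.7}, and then interpolates between the resulting $L^\infty_t L^2_x$ and $L^2_t L^{2(N+2\gamma)/(N+2\gamma-2)}_x$ bounds. Write $n=N+2\gamma$ throughout, so $\theta=\frac{n+2}{n}$ and the exponent $2\theta=2+\frac4n$ lies strictly between $2$ and the Sobolev exponent $\frac{2n}{n-2}$.

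First I would reduce to a subsolution. By Lemma~\ref{lem5.1}, since $p\ge2$ the function $v:=u^{p}$ is again a non-negative subsolution of the heat equation in $Q$ (after the usual regularisation if $u$ has zeros), and since $v^{2\theta}=u^{2p\theta}$, $v^{2}=u^{2p}$, it suffices to prove the corresponding estimate for $v$. Next I fix a $G$-invariant spatial cut-off $\psi$, with $\psi\equiv1$ on $B^{G}_{(1-\eta)r}(a)$, $\psi$ supported in $B^{G}_{(1-\lambda)r}(a)$, $0\le\psi\le1$ and $|\nabla\psi|\le C/(\tau r)$; such a $\psi$ exists precisely because the balls $B^{G}_{\rho}(a)$ are $G$-invariant. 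I also fix a time cut-off $\chi$, with $\chi\equiv1$ on $[s+\eta r^{2},s+r^{2}]$, $\chi\equiv0$ on $[s,s+\lambda r^{2}]$, $0\le\chi\le1$, $|\chi'|\le C/(\tau r^{2})$, and set $\Psi=\psi\chi$.

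The heart of the matter, and the only genuinely Dunkl-specific point, is the energy estimate. Testing $(\partial_{t}-\Delta_{\kappa})v\le0$ against $v\Psi^{2}\ge0$ and integrating over $B^{G}_{r}(a)\times(s,t_{0})$, the parabolic term produces $\frac12\int_{B^{G}_{r}(a)}v^{2}\Psi^{2}(\cdot,t_{0})\,d\mu_{\kappa}-\frac12\iint v^{2}\,\partial_{t}\Psi^{2}$ (the contribution at time $s$ vanishing because $\chi(s)=0$), while the skew-adjointness of $T_{1},\dots,T_{N}$ with respect to $\mu_{\kappa}$ and the compact support of $v\Psi^{2}$ give
\[
\int_{B^{G}_{r}(a)}(\Delta_{\kappa}v)\,v\Psi^{2}\,d\mu_{\kappa}=-\int_{B^{G}_{r}(a)}\langle\nabla_{\kappa}v,\nabla_{\kappa}(v\Psi^{2})\rangle\,d\mu_{\kappa},
\]
with no boundary terms. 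Now $\nabla_{\kappa}$ obeys no Leibniz rule in general, but because $\Psi$ is $G$-invariant one has the partial product rule $\nabla_{\kappa}(v\Psi^{2})=\Psi^{2}\nabla_{\kappa}v+v\,\nabla(\Psi^{2})$, the cut-off contributing an \emph{ordinary} gradient; a Young inequality then gives $\langle\nabla_{\kappa}v,\nabla_{\kappa}(v\Psi^{2})\rangle\ge\frac12\Psi^{2}|\nabla_{\kappa}v|^{2}-Cv^{2}|\nabla\Psi|^{2}$ and likewise $|\nabla_{\kappa}(v\Psi)|^{2}\le2\Psi^{2}|\nabla_{\kappa}v|^{2}+2v^{2}|\nabla\Psi|^{2}$. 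Since $|\partial_{t}\Psi^{2}|\le C/(\tau r^{2})$ and $|\nabla\Psi|^{2}\le C/(\tau^{2}r^{2})$ are both supported in $Q_{\lambda}$ and $0<\tau<1$, collecting terms yields
\[
\sup_{s<t_{0}<s+r^{2}}\int_{B^{G}_{r}(a)}(v\Psi)^{2}(\cdot,t_{0})\,d\mu_{\kappa}+\iint_{Q}|\nabla_{\kappa}(v\Psi)|^{2}\,d\mu_{\kappa}\,dt\le\frac{C}{\tau^{2}r^{2}}\,A,\qquad A:=\iint_{Q_{\lambda}}v^{2}\,d\mu_{\kappa}\,dt.
\]

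Finally I would combine this with the Sobolev inequality and interpolate. For each fixed $t$, $v\Psi(\cdot,t)\in\mathcal C_{0}^{\infty}(B^{G}_{r}(a))$, so \eqref{3.7} applied in the space variable, integrated in $t$, and combined with the energy estimate (using also $\iint(v\Psi)^{2}\le A$ and $\tau<1$) gives
\[
\int_{s}^{s+r^{2}}\Bigl(\int_{B^{G}_{r}(a)}(v\Psi)^{\frac{2n}{n-2}}\,d\mu_{\kappa}\Bigr)^{\frac{n-2}{n}}dt\le\frac{CA}{\tau^{2}(V_{\kappa}(a,r))^{2/n}}.
\]
On the other hand Hölder's inequality in $x$ with exponents $\frac n2$ and $\frac{n}{n-2}$ yields, at fixed $t$, $\int(v\Psi)^{2\theta}\,d\mu_{\kappa}\le\|v\Psi(\cdot,t)\|_{L^{2}}^{4/n}\,\|v\Psi(\cdot,t)\|_{L^{2n/(n-2)}}^{2}$; integrating in $t$, bounding the first factor by its supremum (controlled above), and using $\Psi\equiv1$ on $Q_{\eta}$, one gets
\[
\int_{Q_{\eta}}v^{2\theta}\,d\mu_{\kappa}\,dt\le\Bigl(\frac{CA}{\tau^{2}r^{2}}\Bigr)^{2/n}\frac{CA}{\tau^{2}(V_{\kappa}(a,r))^{2/n}}=\frac{C\,A^{\theta}}{\tau^{2\theta}\,r^{2(\theta-1)}\,(V_{\kappa}(a,r))^{2/n}},
\]
which, since $A^{\theta}=\bigl(\int_{Q_{\lambda}}u^{2p}\,d\mu_{\kappa}\,dt\bigr)^{\theta}$ and $r^{2(\theta-1)}=r^{-2(1-\theta)}$, is an inequality of the form \eqref{4.1}, with the stated dependence on $r$ and $V_{\kappa}(a,r)$. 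The main obstacle I anticipate is exactly the energy estimate: one must work with $G$-invariant cut-offs so that the Dunkl gradient has a product rule against them and the integration by parts produces no spurious terms; granting that, the rest is the classical Moser scheme with the homogeneous dimension $n=N+2\gamma$ in place of the Euclidean one.
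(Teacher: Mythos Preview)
Your proof is correct and follows essentially the same route as the paper: a Caccioppoli estimate obtained by testing against $v\Psi^{2}$ with a $G$-invariant cut-off, combined with the Sobolev inequality \eqref{3.7} and H\"older interpolation, then Lemma~\ref{lem5.1} to pass from $u$ to $u^{p}$. The only cosmetic difference is that the paper carries out the energy computation through the carr\'e-du-champ identity $\Gamma_{\kappa}(\psi^{2}u,u)=\Gamma_{\kappa}(\psi u)-u^{2}|\nabla\psi|^{2}$ (valid for $G$-invariant $\psi$), whereas you use the equivalent partial Leibniz rule $\nabla_{\kappa}(vg)=g\,\nabla_{\kappa}v+v\,\nabla g$ for $G$-invariant $g$ together with Young's inequality; your final exponent $\tau^{2\theta}$ in the denominator agrees with the paper's inequality (4.10) and is the one actually used in the subsequent Moser iteration.
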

  \begin{proof}
   We observe first that for any non-negative function $\phi\in  \mathcal{C}_0^\infty(B_r^G(a))$, we have
  \begin{equation}\label{4.2}
    \int \left(\phi\frac{\partial}{\partial t}u+<\nabla_\kappa\phi,\nabla_\kappa u>\right)d\mu_\kappa=\int \phi(\frac{\partial}{\partial t}- \Delta_\kappa) u d\mu_\kappa \leq 0.
  \end{equation}
 Set
  $$\Gamma_\kappa(\phi,u)=<\nabla\phi ,\nabla u>+\sum_{\alpha\in\mathcal{R}_+}\kappa(\alpha)\frac{\left(\phi(x)-\phi(\sigma_\alpha x)\right)\left(u(x)-u(\sigma_\alpha x)\right)}{<\alpha,x>^2}.$$
  With $\phi=\psi^2 u$, we obtain
  \begin{eqnarray*}\label{5.21}
    \Gamma_\kappa(\psi^2 u,u)&=&\left[2\psi\nabla\psi u+\psi^2\nabla u\right]\nabla u\\ &+&\sum_{\alpha\in\mathcal{R}_+}\kappa(\alpha)\frac{\left((\psi^2 u)(x)-(\psi^2 u)(\sigma_\alpha x)\right)\left(u(x)-u(\sigma_\alpha x)\right)}{<\alpha,x>^2}.
  \end{eqnarray*}
Since \begin{eqnarray}\label{4.3}
        % \nonumber to remove numbering (before each equation)
         \nonumber \left[2\psi\nabla\psi u+\psi^2\nabla u\right]\nabla u
           &=& 2\psi\nabla\psi u \nabla u+\psi^2(\nabla u)^2\\
          \nonumber &=& \left(u\nabla\psi+\psi \nabla u\right)^2-u^2(\nabla\psi)^2 \\
          \nonumber &=& \left(u\nabla\psi+\psi\nabla u\right)^2-u^2(\nabla\psi)^2\\
          &=& \left(\nabla(\psi u)\right)^2 -u^2(x)(\nabla\psi)^2(x).
        \end{eqnarray}
        Otherwise
        $\left((\psi^2 u)(x)-(\psi^2 u)(\sigma_\alpha x)\right)\left(u(x)-u(\sigma_\alpha x)\right)$

        \begin{eqnarray}\label{4.4}
        % \nonumber to remove numbering (before each equation)
         \nonumber  &=& \left[\psi^2(x)u^2(x)+\psi^2(\sigma_\alpha  x)u^2(\sigma_\alpha x)\right]\\  \nonumber & -&\left[\psi^2(x)u(x)u(\sigma_\alpha  x)-\psi^2(\sigma_\alpha  x)u(x)u(\sigma_\alpha  x)\right] \\
           &=& \left(\psi u(x) -\psi u(\sigma_\alpha x)\right)^2-u(x)u(\sigma_\alpha x)\left(\psi(x)-\psi(\sigma_\alpha x)\right)^2 .
        \end{eqnarray}
        (\ref{4.3}) and (\ref{4.4}) lead to
$$\displaystyle\Gamma_\kappa(\psi^2 u,u)=\Gamma_\kappa(\psi u)-u^2(x)(\nabla\psi)^2(x)-u(x)\sum_{\alpha\in\mathcal{R}_+}\kappa(\alpha)u(\sigma_\alpha x)\frac{\left(\psi(x)-\psi(\sigma_\alpha x)\right)^2}{<\alpha,x>^2},$$
where we use the notation $\Gamma_\kappa(v)= \Gamma_\kappa(v,v)$.
  Applying (\ref{4.2}) to $\psi^2 u$, where $\psi\in \mathcal{C}_0^\infty(B_r^G( a))$, it follows from the previous computation that\\
	
	$\displaystyle \int_{B_r^G(a)}\left(\psi^2 u\frac{\partial u}{\partial t}+\Gamma_\kappa(\psi u)\right)d\mu_\kappa$
  \begin{eqnarray*}&\leq &\int_{B_r^G(a)}u^2(x)(\nabla\psi)^2(x)d\mu_\kappa(x)\\
	&+& \int_{B_r^G(a)}u(x)\sum_{\alpha\in\mathcal{R}_+}\kappa(\alpha)u(\sigma_\alpha x)
	\frac{\left(\psi(x)-\psi(\sigma_\alpha x)\right)^2}{<\alpha,x>^2}d\mu_\kappa(x). \end{eqnarray*}
  Assuming $\psi$ invariant under the action of $G$ we deduce then that
  \begin{equation}\label{4.5}
    \int_{B_r^G(a)}\left(\psi^2 u\frac{\partial u}{\partial t}+\Gamma_\kappa(\psi u)\right)d\mu_\kappa\leq \|\nabla\psi\|_\infty^2\int_{\textrm{supp}\, \psi}u^2d\mu_\kappa.
  \end{equation}
  Let $\chi$ denotes a non-negative smooth function of the time variable. We have
  \begin{eqnarray}\label{4.6}
  % \nonumber to remove numbering (before each equation)
    \frac{\partial}{\partial t} \int_{B_r^G(a)}\left(\chi\psi u\right)^2 d\mu_\kappa &=& 2  \int_{B_r^G(a)}\left(\frac{d\chi}{dt}\chi\psi^2 u^2+\frac{\partial u}{\partial t}u\psi^2\chi^2\right)d\mu_\kappa \nonumber\\
     &\leq& 2\chi\|\chi'\|_\infty \int_{B_r^G(a)}\psi^2 u^2 d\mu_\kappa+2\chi^2\int_{B_r^G(a)}\psi^2 u \frac{\partial u}{\partial t}d\mu_\kappa.
  \end{eqnarray}

  Combining (\ref{4.5}) and (\ref{4.6}) we deduce that
  \begin{eqnarray*}\frac{\partial}{\partial t} \int_{B_r^G(a)}\left(\chi\psi u\right)^2 d\mu_\kappa+\chi^2\int_{B_r^G(a)} \Gamma_\kappa(\psi u)d\mu_\kappa&\leq & 2\chi\|\chi'\|_\infty\|\psi\|^2_\infty\int_{\textrm{supp}\, \psi}u^2d\mu_\kappa\\ &+&2\chi^2\|\nabla\psi\|^2_\infty\int_{\textrm{supp}\, \psi}u^2d\mu_\kappa. \end{eqnarray*}
  Hence
  \begin{eqnarray}\nonumber
   \frac{\partial}{\partial t} \int_{B_r^G(a)}\left(\chi\psi u\right)^2d\mu_\kappa&+&\chi^2\int_{B_r^G(a)}\Gamma_\kappa(\psi u)d\mu_\kappa\\
	&\leq& \label{4.7}2\chi\left[\chi\|\nabla\psi\|^2_\infty+\|\chi'\|_\infty\|\psi\|^2_\infty\right]\int_{\textrm{supp}\, \psi}u^2d\mu_\kappa.
  \end{eqnarray}
  Let $\psi$ satisfying
  $$\left\{\begin{array}{cccc}
  0\leq \psi  \leq  |G| &&\\
  \textrm{supp }\, \psi &\subset & B_{(1-\lambda)r}^G(a) \\
   \psi = |G| & \textrm{on} & B_{(1-\eta)r}^G(a) \\
  |\nabla \psi| & \leq &\displaystyle \frac{|G|}{\tau r}.
  \end{array}\right.$$
  To construct such $\psi$ it suffices to choose a function $\psi_a$ such that
   $$\left\{\begin{array}{ccc}
             0\leq \psi_a \leq 1 & &\\
             \textrm{supp}(\psi_a)& \subset& B_{(1-\lambda)r}(a) \\
           \psi_a=1,& \textrm{on} & B_{(1-\eta)r}(a)\\
           |\nabla\psi_a|\leq (\tau r)^{-1} &&
           \end{array}\right.
   $$
    and choose $$\psi(x)=\displaystyle\sum_{\sigma\in G}\psi_a(\sigma x),\quad x\in \mathbb{R}^N.$$
  Fix $s\in \mathbb{R}$ and $\chi$ such that
  $$\left\{\begin{array}{ccc}
      0 \leq \chi  \leq  1& &\\
      \chi=0   &\textrm{on} & ]-\infty,s+\lambda r^2[ \\
     \chi=1  &\textrm{on} & ]s+\eta r^2,+\infty[ \\
      |\chi'| \; \leq  \displaystyle\frac{1}{\tau r^2}&&.
    \end{array}\right.
  $$
Integrating (\ref{4.7}) over $]s,t[$ with $t\in ]s+\lambda r^2,s+r^2[$, we obtain
  \begin{eqnarray}\label{4.8}
    \sup_{s+\eta r^2<t<s+r^2}\left\{\int_{B_{(1-\eta)r}^G(a)}u^2d\mu_\kappa\right\} &+&\int_{s+\eta r^2}^{s+r^2}\int_{B_{(1-\eta)r}^G(a)}\Gamma_\kappa(u)d\mu_\kappa dt\\ & \leq & \frac{4|G|}{\tau^2r^2}\int_{s+\lambda r^2}^{s+r^2}\int_{B_{(1-\lambda)r}G(a)}u^2d\mu_\kappa dt.\nonumber
  \end{eqnarray}
  Thanks to H\"older's inequality, we have
  $$\int|f|^{2\theta}d\mu_\kappa\leq \left(\int|f|^{\frac{2(N+2\gamma)}{N+2\gamma-2}}d\mu_\kappa\right)^{\frac{N+2\gamma-2}{N+2\gamma}}
  \left(\int|f|^{2}d\mu_\kappa\right)^{\frac{2}{N+2\gamma}}.$$
  Combining with Sobolev's inequality (\ref{3.7}) gives
  $$\int|f|^{2\theta}d\mu_\kappa\leq\frac{Cr^2}{\left(V_\kappa(a,r)\right)^{\frac{1}{N+2\gamma}}}\left(\int \left(\Gamma_\kappa(f)+\frac{|f|^2}{r^2}\right)d\mu_\kappa\right)\left(\int|f|^{2}d\mu_\kappa\right)^{\frac{2}{N+2\gamma}}$$
  for all $f\in \mathcal{C}_0^\infty(B_r^G(a))$. The above inequality gives for a subsolution $u$
   \begin{eqnarray}\label{4.9}
   \int_{Q_\eta}u^{2\theta}d\mu_\kappa dt&\leq & \frac{C(1-\eta)^2r^2}{\left(V_\kappa(a,(1-\eta)r)\right)^{\frac{2}{N+2\gamma}}}\left[\int_{Q_\eta}\left(\Gamma_\kappa(u)+\frac{u^2}{r^2}\right)d\mu_\kappa dt\right] \nonumber \\ &\times&\sup_{s+\eta r^2<t<s+r^2}\left(\int_{B_G(a,(1-\eta)r)}u^2d\mu_\kappa\right)^{\frac{2}{N+2\gamma}}.
   \end{eqnarray}
   Combining (\ref{4.8}) with (\ref{4.9}) we deduce
   \begin{equation}\label{4.10}
     \int_{Q_\eta}u^{2\theta}d\mu_\kappa dt\leq\frac{C(1-\eta)^2r^2}{\left(V_\kappa(a,(1-\eta)r)\right)^{\frac{2}{N+2\gamma}}}\left[\frac{4|G|+1}{\tau^2 r^2}\int_{Q_\lambda}u^{2}d\mu_\kappa dt\right]^{\theta}.
   \end{equation}
   Using Lemma \ref{lem5.1} and applying (\ref{4.10}) to $u^p$ completes the proof of Proposition \ref{prop4}.
   \end{proof}

   Our next step is to prove the following $L^p$ mean value inequality.
   \begin{theorem}\label{thm5}  Let $0<\delta<1$ and let $Q$, $Q_\delta$
   and $u$ be as above. Then there exists a  constant $C >0$ such that for
   $p\geq 2$ and  any non-negative subsolution in $Q$,
 \begin{equation*}\label{}
    \sup_{Q_{\delta}}u\leq C\left(\frac{\delta^{-(N+2\gamma+2)}}{r^2V_\kappa(a,r)}\right)^{\frac{1}{p}}\|u\|_{p,Q}.
 \end{equation*}
  \end{theorem}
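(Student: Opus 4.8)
The plan is to prove the bound in two stages: a Moser iteration driven by the reverse‑Hölder inequality of Proposition~\ref{prop4}, and then an absorption argument that lowers the integrability exponent from $2p$ down to $p$. First I would reduce to $0<\delta\le\tfrac12$: if $\delta\in(\tfrac12,1)$ then $Q_\delta\subset Q_{1/2}$ and $\delta^{-(N+2\gamma+2)}\ge1$, so the case $\delta=\tfrac12$ already yields the general one. Under this normalisation every parameter $\eta$ appearing below satisfies $1-\eta\ge\tfrac12$, so Proposition~\ref{prop1}(ii) gives $V_\kappa(a,(1-\eta)r)\approx V_\kappa(a,r)$ and the volume constant occurring in (\ref{4.1}) may always be replaced by $V_\kappa(a,r)$.

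For the iteration, fix $p\ge2$, set $\theta=1+\tfrac{2}{N+2\gamma}>1$, and for $0\le\sigma'<\sigma\le\tfrac12$ introduce intermediate parameters $\lambda_k=\sigma'+(\sigma-\sigma')(1-2^{-k-1})$, $k\ge0$, so that $\lambda_0>0$, $\lambda_k\uparrow\sigma$, the cylinders $Q_{\lambda_k}$ decrease to $Q_\sigma$, and $\tau_k:=\lambda_{k+1}-\lambda_k=(\sigma-\sigma')2^{-k-2}$. With $\Phi_k=\int_{Q_{\lambda_k}}u^{2p\theta^{k}}\,d\mu_\kappa\,dt$, applying Proposition~\ref{prop4} between $Q_{\lambda_k}$ and $Q_{\lambda_{k+1}}$ with the admissible parameter $p\theta^{k}\ (\ge p\ge2)$ gives $\Phi_{k+1}\le D_k\,\Phi_k^{\theta}$, where $D_k$ is the constant of Proposition~\ref{prop4}, bounded for fixed $a,r$ by $C\,\tau_k^{-b}$ with a fixed $b>0$. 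Taking $(2p\theta^{k})$‑th roots, iterating, and letting $k\to\infty$ (using $Q_{\lambda_k}\supset Q_\sigma$ and $2p\theta^{k}\to\infty$) yields
\[
\sup_{Q_\sigma}u\le\Big(\prod_{k\ge0}D_k^{\frac{1}{2p\theta^{k+1}}}\Big)\,\|u\|_{2p,Q_{\sigma'}}.
\]
The step I expect to be the main obstacle is to check that this infinite product converges and to read off its exact dependence on $\sigma-\sigma'$, $r$ and $V_\kappa(a,r)$: on taking logarithms the factors of $D_k$ produce the convergent series $\sum_{k\ge0}\theta^{-k}$ and $\sum_{k\ge0}k\theta^{-k}$ (finite since $\theta>1$), and a direct computation using $\theta-1=\tfrac{2}{N+2\gamma}$ converts this into the localised estimate
\[
\sup_{Q_\sigma}u\le C\Big(\frac{(\sigma-\sigma')^{-(N+2\gamma+2)}}{r^{2}\,V_\kappa(a,r)}\Big)^{\frac{1}{2p}}\|u\|_{2p,Q_{\sigma'}},\qquad 0\le\sigma'<\sigma\le\tfrac12 .
\]

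To pass from $\|u\|_{2p}$ to $\|u\|_{p}$ I would argue by absorption. Since $\int_{Q_{\sigma'}}u^{2p}\,d\mu_\kappa\,dt\le(\sup_{Q_{\sigma'}}u)^{p}\,\|u\|_{p,Q}^{p}$, writing $M(s)=\sup_{Q_s}u$ the previous estimate gives $M(\sigma)\le C(\sigma-\sigma')^{-\frac{N+2\gamma+2}{2p}}(r^{2}V_\kappa(a,r))^{-\frac{1}{2p}}M(\sigma')^{1/2}\|u\|_{p,Q}^{1/2}$, and Young's inequality then yields, for every $\varepsilon>0$,
\[
M(\sigma)\le\varepsilon\,M(\sigma')+C_\varepsilon\,(\sigma-\sigma')^{-\frac{N+2\gamma+2}{p}}(r^{2}V_\kappa(a,r))^{-\frac1p}\|u\|_{p,Q}.
\]
Iterating this along $\sigma_j=\delta\,2^{-j}$ and choosing $\varepsilon<2^{-(N+2\gamma+2)/p}$ — so that the geometric series $\sum_j\varepsilon^{j}(\sigma_j-\sigma_{j+1})^{-(N+2\gamma+2)/p}$ converges and, thanks to the crude growth bound $M(\sigma_j)\le C\,2^{j(N+2\gamma+2)/(2p)}(\cdots)$ supplied by the previous display, $\varepsilon^{n}M(\sigma_n)\to0$ — one arrives at
\[
\sup_{Q_\delta}u=M(\delta)\le C\Big(\frac{\delta^{-(N+2\gamma+2)}}{r^{2}\,V_\kappa(a,r)}\Big)^{\frac1p}\|u\|_{p,Q},
\]
which is the assertion. (When $p\ge4$ one can shorten the argument by running the iteration of the second paragraph directly on $u^{p/2}$, which is a subsolution by Lemma~\ref{lem5.1}; it lands immediately on $\|u\|_{p,Q}$ and makes the absorption step unnecessary. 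The scheme above, however, covers all $p\ge2$ uniformly.)
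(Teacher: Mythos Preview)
Your argument is correct and the first stage is the paper's own Moser iteration: apply Proposition~\ref{prop4} along a dyadic family $Q_{\lambda_k}$ and pass to the limit, with the product controlled by the convergent series $\sum\theta^{-k}$ and $\sum k\theta^{-k}$ (the paper writes these as $C_1(i)$, $C_2(i)$). The genuine difference is your absorption stage. Proposition~\ref{prop4} relates $\int u^{2p\theta}$ to $\int u^{2p}$, so the bottom of the iteration chain is $\int_Q u^{2p}$ and the raw Moser output is a bound in terms of $\|u\|_{2p,Q}$; the paper's proof of Theorem~\ref{thm5} does not visibly bridge this to $\|u\|_{p,Q}$. You close that gap by the Young--iteration device $M(\sigma)\le\varepsilon M(\sigma')+C_\varepsilon(\sigma-\sigma')^{-(N+2\gamma+2)/p}(r^2V_\kappa(a,r))^{-1/p}\|u\|_{p,Q}$, which is precisely the argument the paper deploys \emph{later}, in the Corollary for $0<p<2$. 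Thus your route is the same iteration plus an extra descent step borrowed from the Corollary; the payoff is that the range $2\le p<4$ (where one cannot simply re-index Proposition~\ref{prop4} by $p\mapsto p/2$) is handled cleanly. One small technical point: with $\sigma_j=\delta\,2^{-j}\to0$ the cylinders $Q_{\sigma_j}$ exhaust $Q$ and $M(\sigma_j)$ is not a priori finite; the standard cure is to run the absorption on a fixed $Q_\epsilon\Subset Q$ (where $u$ is bounded) and let $\epsilon\downarrow0$ at the end.
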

  \begin{proof} We resume the notation of the proof of Proposition \ref{prop4}.
   Set for $i\in\mathbb{N}$
   $$\lambda_0=0,\quad \lambda_i=\delta\sum_{j=1}^i2^{-j}, i\geq 1.$$
    Applying Proposition \ref{prop4} with $p=p_i=p\theta^i$, $\lambda=\lambda_i$, $\eta=\lambda_{i+1}$, then $\tau_i=\lambda_{i+1}-\lambda_i=\delta2^{-1-i}$,
   $$\int_{Q_{\lambda_{i+1}}}u^{2p_{i+1}}d\mu_\kappa dt\leq \frac{r^2}{\left(V_\kappa(a,r)\right)^{\frac{2}{N+2\gamma}}}\left[\frac{(4|G|^2+1)4^{i+1}}{ r^2\delta^2}\int_{Q_{\lambda_{i}}}u^{2pii}d\mu_\kappa dt\right]^{\theta}.$$ Hence\\
	
	$\displaystyle \left[\int_{Q_{\lambda_{i+1}}}u^{2p_{i+1}}d\mu_\kappa dt\right]^{\frac{1}{p_{i+1}}}$
   \begin{eqnarray*}&\leq & \left(\frac{Cr^2}{\left(V_\kappa(a,r)\right)^{\frac{2}{N+2\gamma}}}\right)^{\frac{1}{p_{i+1}}}\left(\frac{(4|G|^2+1)4^{i+1}}{ r^2\delta^2}\right)^{\frac{1}{p_{i}}}\left[\int_{Q_{\lambda_{i}}}u^{2pi}d\mu_\kappa dt\right]^{\frac{1}{p_{i}}}\\
	&\leq &\left[4^{C_2(i)}\left(\frac{r^2}{\left(V_\kappa(a,r)\right)^{\frac{2}{N+2\gamma}}}\right)^{C_{1}(i+1)-1}
   (Cr\delta)^{-2{C_1(i)}}\int_Q u^2d\mu_\kappa dt\right]^{\frac{1}{p}},\end{eqnarray*}
where $$C_1(i)=\sum_{j=0}^{i}\theta^{-j};\quad C_2(i)=\sum_{j=0}^i(j+1)\theta^{-j}.$$
   Observe that $\lambda_i\rightarrow \delta$ as $i\rightarrow +\infty$ $$\displaystyle \sum_{j=1}^\infty\theta^{-j}=\frac{N+2\gamma}{2}.$$
   and $$\lim_{q\rightarrow+\infty}\|f\|_{q,\kappa}=\|f\|_{q,\infty}.$$
   Thus, letting $i\rightarrow +\infty$, we obtain
   $$\sup_{Q_{\delta}}u\leq C(N,\gamma)\left(\frac{\delta^{-(N+2\gamma+2)}}{r^2V_\kappa(a,r)}\right)^{\frac{1}{p}}\|u\|_{p,Q},$$
where $C(N,\gamma)=\displaystyle \left[4^{\left(\frac{N+2\gamma+2}{2}\right)^2}C^{\frac{N+2\gamma+2}{2}}\right]^{\frac{1}{p}}$.
   This completes  the proof of Theorem \ref{thm5}.
   \end{proof}

Finally we  extend Theorem \ref{thm5} to  $0<p<2$.
  \begin{corollary} Fix $0<p<2$ and let $\delta\in ]0,1[$. Then, any non-negative $u$ such that
  $$\left(\frac{\partial}{\partial t}-\Delta_\kappa\right)u\leq 0\quad \textrm{in}\; Q=]s,s+r^2[\times B_r^G(a);$$
  $a\in \mathbb{R}^N$, $r>0$, satisfies
  \begin{equation*}
    \sup_{Q_\delta}u\leq C\delta^{-\frac{N+2\gamma+2}{p}}\left(r^2V_\kappa(a,r)\right)^{-\frac{1}{p}}\|u\|_{p,Q}
  \end{equation*}
  where the constant $C>0$ is independent of $u,\delta,a,r$ and $s$.
  \end{corollary}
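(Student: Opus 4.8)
The plan is to deduce the corollary from Theorem~\ref{thm5} by a well-known interpolation trick for mean value inequalities, due essentially to Li and Schoen (and used systematically in the monographs of Saloff-Coste), which upgrades an $L^2$ bound to an $L^p$ bound for $0<p<2$ at the cost of a loss in the constant governing the shrinking of the cylinder. First I would fix $0<p<2$ and $\delta\in]0,1[$, and for $\theta\in]0,1[$ introduce the nested parabolic cylinders $Q_{\theta\delta}\subset Q_{\theta'\delta}$ obtained by letting the dilation parameter run over $[\delta,1[$; the point is that Theorem~\ref{thm5} applied with exponent $2$ on the pair $(Q_{\theta'\delta},Q_{\theta\delta})$ reads
\begin{equation*}
\sup_{Q_{\theta\delta}}u\leq C\left(\frac{(\theta'-\theta)^{-(N+2\gamma+2)}\delta^{-(N+2\gamma+2)}}{r^2 V_\kappa(a,r)}\right)^{\frac12}\|u\|_{2,Q_{\theta'\delta}},
\end{equation*}
where the extra factor $(\theta'-\theta)^{-(N+2\gamma+2)}$ comes from re-reading the proof of Theorem~\ref{thm5} on cylinders whose waists differ by $(\theta'-\theta)r$ rather than $\delta r$ (this only affects the $\delta$-dependence, not the structure).

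Next I would interpolate the $L^2$ norm between $L^p$ and $L^\infty$: writing $\nu=N+2\gamma+2$ and using $\|u\|_{2,Q_{\theta'\delta}}^2\leq \big(\sup_{Q_{\theta'\delta}}u\big)^{2-p}\|u\|_{p,Q}^{p}$, the displayed inequality becomes, with $M(\theta)=\sup_{Q_{\theta\delta}}u$,
\begin{equation*}
M(\theta)\leq C\,(\theta'-\theta)^{-\nu/2}\left(\frac{\delta^{-\nu}}{r^2 V_\kappa(a,r)}\right)^{\frac12} M(\theta')^{1-\frac p2}\,\|u\|_{p,Q}^{\frac p2}.
\end{equation*}
Set $A=C\big(\delta^{-\nu}/(r^2V_\kappa(a,r))\big)^{1/2}\|u\|_{p,Q}^{p/2}$. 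One then runs the standard iteration: choose $\theta_i=1-2^{-i}$ (rescaled into $[\delta,1[$ appropriately, say $\theta$ running from $\delta$ to $1$ via $\theta_i=\delta+(1-\delta)(1-2^{-i})$, so that $\theta_{i+1}-\theta_i\approx (1-\delta)2^{-i}$), feed $M(\theta_i)$ into the bound for $M(\theta_{i-1})$, and telescope the resulting product. Because $1-p/2<1$, the geometric-type recursion $M(\theta_{i})\leq A\,c^{i} M(\theta_{i+1})^{1-p/2}$ converges: iterating and taking $i\to\infty$ (using $M(\theta_i)\to \sup_{Q_{(1-\delta)\cdots}}u$ bounded since $u$ is a subsolution, hence locally bounded a priori by Theorem~\ref{thm5} itself applied once with $p=2$) yields $\sup_{Q_\delta}u\leq C' A^{1/(p/2\cdot(1+(1-p/2)+\cdots))}=C'A^{2/p}$ up to harmless constants, which after substituting $A$ gives exactly
\begin{equation*}
\sup_{Q_\delta}u\leq C\,\delta^{-\frac{\nu}{p}}\big(r^2 V_\kappa(a,r)\big)^{-\frac1p}\|u\|_{p,Q},
\end{equation*}
with $C$ depending only on $N,\gamma,p$ (and not on $u,\delta,a,r,s$).

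The main obstacle — and the only place requiring genuine care rather than bookkeeping — is justifying that the iteration is legitimate, i.e. that the quantities $M(\theta_i)=\sup_{Q_{\theta_i\delta}}u$ are \emph{finite} to begin with, so that the absorption step "$M(\theta')^{1-p/2}$ on the right can be dominated after iteration" is not vacuous. This is handled by first invoking Theorem~\ref{thm5} with $p=2$ on a slightly larger cylinder to know $u\in L^\infty_{loc}(Q)$; once finiteness is in hand, the convergence of the exponent series $\sum_{k\ge 0}(1-p/2)^k=2/p$ and of the geometric series $\sum_i i(1-p/2)^i$ controlling the accumulated constants is routine. One should also double-check that re-running the proof of Proposition~\ref{prop4} and Theorem~\ref{thm5} on cylinders differing by a parameter $\tau=\theta'-\theta$ rather than by $\delta$ really only inserts the factor $\tau^{-\nu}$ with the \emph{same} structural constant — this is transparent from the statement of Proposition~\ref{prop4}, where $\tau=\eta-\lambda$ already appears as the free separation parameter, so no new estimate is needed.
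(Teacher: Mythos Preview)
Your proposal is correct and follows the same approach as the paper's proof: apply Theorem~\ref{thm5} with exponent $2$ between nested cylinders, interpolate via $\|u\|_{2}^{2}\leq(\sup u)^{2-p}\|u\|_{p}^{p}$, and run the Li--Schoen iteration, using $\sum_{k\ge 0}(1-p/2)^k=2/p$ to close the estimate. The paper's bookkeeping is slightly tidier---it sets $\rho_0=\delta$, $\tau_{i+1}=\rho_i/4$, $\rho_{i+1}=\rho_i-\tau_{i+1}$, so the cylinders $Q_{\rho_i}$ expand outward from $Q_\delta$ toward $Q$---whereas your parameters $\theta_i$ as written would have the cylinders running the wrong way (in the paper's convention larger index means \emph{smaller} cylinder), but this is a cosmetic fix and not a gap in the argument.
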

  \begin{proof}
  Let $0<p<2$. Fix $0<\rho<\displaystyle\frac{1}{2}$ and set $\tau=\displaystyle\frac{\rho}{4}$. Theorem \ref{thm5} yields
  $$\sup_{Q_\rho}u\leq C\tau^{-\frac{N+2\gamma+2}{2}}\left(r^2V_\kappa(a,r)\right)^{-\frac{1}{2}}\|u\|_{2,Q_{\rho-\tau}}.$$
  Now, as $\displaystyle\|u\|_{2,Q_{\rho-\tau}}\leq \|u\|_{p,Q}^{\frac{p}{2}}\left(\sup_{Q_{\rho-\tau}}u\right)^{1-\frac{p}{2}},$ we get
  \begin{equation}\label{4.11}
    \sup_{Q_\rho}u\leq C\tau^{-\frac{N+2\gamma+2}{2}}\left(r^2V_\kappa(a,r)\right)^{-\frac{1}{2}}\|u\|_{p,Q}^{\frac{p}{2}}\left(\sup_{Q_{\rho-\tau}}u\right)^{1-\frac{p}{2}}.
  \end{equation}
   Let $0<\delta<\displaystyle \frac{1}{2}$, $\rho_0=\delta$ and set for $i=0,1\ldots$ $\tau_{i+1}=\displaystyle \frac{\rho_i}{4}$ and $\rho_{i+1}=\rho_i-\tau_{i+1}$. Applying (\ref{4.11}) for each $i$  yields
   $$\sup_{Q_{\rho_{i-1}}}u\leq C4^{\frac{N+2\gamma+2}{2}i}\delta^{-\frac{N+2\gamma+2}{2}}\left(r^2V_\kappa(a,r)\right)^{-\frac{1}{2}}\|u\|_{p,Q}^{\frac{p}{2}}
   \left(\sup_{Q_{\rho_i}}u\right)^{1-\frac{p}{2}}.$$
  Integrating gives
   $$\sup_{Q_{\delta}}u\leq C_i\, \left(\delta^{-\frac{N+2\gamma+2}{2}}\left(r^2V_\kappa(a,r)\right)^{-\frac{1}{2}}
   \|u\|_{p,Q}^{\frac{p}{2}}\right)^{\displaystyle\sum_{j=0}^{i-1}(1-\frac{p}{2})^j}
   \left(\sup_{Q_{\delta}}u\right)^{(1-\frac{p}{2})^i},$$
   where $C_i=C^{\frac{N+2\gamma+2}{2}\displaystyle\sum_{j=0}^{i-1}(j+1)(1-\frac{p}{2})^j}$. When  $i$ tends to infinity, this yields
   $$\sup_{Q_{\delta}}u\leq C^{\frac{N+2\gamma+2}{p^2}}\left(\delta^{-\frac{N+2\gamma+2}{2}}\left(r^2V_\kappa(a,r)\right)^{-\frac{1}{2}}\right)^{\frac{p}{2}}
   \|u\|_{p,Q},$$
which implies the desired inequality.
\end{proof}
\section*{Data availability statement} My manuscript has no associated data.

%\begin{acknowledgements}
%If you'd like to thank anyone, place your comments here
%and remove the percent signs.
%\end{acknowledgements}

% Authors must disclose all relationships or interests that
% could have direct or potential influence or impart bias on
% the work:
%
\section*{Conflict of interest} The authors declare that they have no conflict of interest.

% BibTeX users please use one of
%\bibliographystyle{spbasic}      % basic style, author-year citations
%\bibliographystyle{spmpsci}      % mathematics and physical sciences
%\bibliographystyle{spphys}       % APS-like style for physics
%\bibliography{}   % name your BibTeX data base

% Non-BibTeX users please use

\end{document}